\newcommand{\eChar}{\begin{enumerate}[(i)]}
\newcommand{\eCharR}{\begin{enumerate}[(a)]}
\newcommand{\eBr}{\begin{enumerate}[(1)]}
\title
{The Graph Curvature Calculator and the  curvatures of cubic graphs}
\author[1]{D. Cushing}
\author[2]{R. Kangaslampi}
\author[2]{V. Lipi\"ainen}
\author[3]{S. Liu}
\author[4]{G. W. Stagg}
\affil[1]{Department of Mathematical Sciences, Durham University}
\affil[2]{Department of Mathematics and Systems Analysis, Aalto University }
\affil[3]{School of Mathematical Sciences, University of Science and Technology of China}
\affil[4]{School of Mathematics, Statistics and Physics, Newcastle University }
\date{\today}
\theoremstyle{plain}
\newtheorem{lemma}{Lemma}[section]
\newtheorem{theorem}[lemma]{Theorem}
\newtheorem{proposition}[lemma]{Proposition}
\newtheorem{corollary}[lemma]{Corollary}
\theoremstyle{definition}
\newtheorem{definition}{Definition}[section]
\newtheorem{rem}[lemma]{Remark}
\newtheorem{defn}[lemma]{Definition}
\numberwithin{equation}{section}
\begin{document}

\maketitle

\begin{abstract}
We classify all cubic graphs with either non-negative Ollivier-Ricci curvature or non-negative Bakry-\'Emery curvature everywhere. We show in both curvature notions that the non-negatively curved graphs are the prism graphs and the M\"obius ladders. We also highlight an online tool for calculating the curvature of graphs under several variants of the curvature notions that we use in the classification. As a consequence of the classification result we show that non-negatively curved cubic expanders do not exist.
\end{abstract}

\section{Introduction and statement of results}
Ricci curvature is a fundamental notion in the study of Riemannian manifolds. This notion has been generalised in various ways from the smooth setting of manifolds  to more general metric spaces. This article considers Ricci curvature notions in the discrete setting of graphs. Several adaptations of Ricci curvature such as Bakry-\' Emery curvature (see e.g. \cite{LY10,KKRT16,CLP2016}), Ollivier-Ricci curvature \cite{Oll},  Entropic curvature introduced by Erbar and Maas \cite{EM12}, and Forman curvature \cite{Forman,SKM}, have emerged on graphs in recent years, and there is very active research on these notions. We refer to \cite{NR17} and the references therein for this vibrant research field. 

We focus on the  Bakry-\' Emery curvature and Ollivier-Ricci curvature  in this article. Various modifications of Ollivier-Ricci curvature \cite{LLY11, MF, BCLMP} will also be considered.   Those discrete Ricci curvature notions have also been shown to play significant roles in various applied fields, including:
\begin{itemize}
\item
Studying complex biological networks, such as cancer \cite{SGR},  brain connectivity \cite{Far}, and phylogenetic trees \cite{WM17}.
\item
Quantifying the systemic risk and fragility of financial systems, see \cite{SGR2}.
\item
Investigating node degree, the clustering coefficient and global measures on the internet topology, see \cite{Ni}.

\item Studying the ``congestion'' phenomenon in wireless networks under the heat-diffusion protocol, see \cite{WJBan}.

\item Fast approximating to the tree-width of a graph and applications to determining whether a Quadratic Unconstrained Binary Optimization problem is solvable on the D-Wave quantum computer, see \cite{WJB}.

\item Studying the problem of quantum grativity, see \cite{TrPRE, TrHE}.
\end{itemize}

Because of the complexity of the calculations of these curvature notions on graphs it has proven useful to develop software for dealing with the computations. We will present an online tool that calculates the curvature of graphs in many notions, and use it for some of the calculations of our main theorem:

\begin{theorem}\label{main_thm}
Let $G=(V,E)$ be a cubic graph. Then the following are equivalent: 
\begin{enumerate}
\item[i)]
Each vertex $x\in V$ satisfies the Bakry-\'Emery curvature-dimension inequality $CD(0,\infty)$;
\item[ii)]
Each edge $xy\in E$ has Olliver-Ricci curvature $\kappa_{0}(x,y)\geq 0$;
\item[iii)] $G$ is a prism graph or a M\"obius ladder.
\end{enumerate}
\end{theorem}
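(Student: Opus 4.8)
The plan is to prove the three-way equivalence by establishing the chain of implications, exploiting the fact that both curvature notions impose strong local constraints on cubic graphs. I would first record the relevant definitions in the cubic setting: for a $3$-regular graph, the local structure around an edge $xy$ is determined by a small, finite amount of combinatorial data — the number of triangles through $xy$, the number of $4$-cycles, and the adjacencies among the neighbours of $x$ and $y$. Both $CD(0,\infty)$ and $\kappa_0(x,y)\ge 0$ reduce, via the standard formulas, to inequalities in this finite data, so the core of the argument is a \emph{local classification}: determine exactly which local neighbourhoods of an edge are compatible with non-negative curvature.

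\textbf{Direction (iii) $\Rightarrow$ (i), (ii).} This is the routine direction. The prism graphs $Y_n = C_n \times K_2$ and the M\"obius ladders $M_n$ are vertex-transitive (indeed edge-transitive up to the two edge-orbits of rung versus rail), so it suffices to check the curvature-dimension inequality and the Ollivier-Ricci sign on a single representative edge of each type. I would simply compute $\kappa_0$ and verify $CD(0,\infty)$ directly for these finitely many local pictures, most plausibly with the aid of the Graph Curvature Calculator advertised in the paper.

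\textbf{Direction (i) $\Rightarrow$ (iii) and (ii) $\Rightarrow$ (iii).} These are the substance of the theorem, and I expect the main obstacle to lie here. The strategy is to show that non-negative curvature (in either sense) forces rigid local constraints, which then propagate globally. Concretely, I would argue that around any edge $xy$ the neighbourhood cannot contain a triangle unless the graph is very small (triangles in a cubic graph tend to \emph{lower} one of the curvatures through the degree-three bottleneck), and more importantly that each vertex must lie in a $4$-cycle structure forcing the ``ladder'' pattern: every edge is either a rung or a rail of a quadrilateral grid wrapped into a cycle. Once I establish that each vertex is contained in exactly the local pattern of a ladder, I would show that the only ways to close up such a locally-ladder cubic graph consistently are the cyclic closure (giving $Y_n$) or the twisted closure (giving $M_n$); any other closure either creates a short cycle that violates curvature or fails to be $3$-regular. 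The delicate part is ruling out the numerous sporadic small cubic graphs and graphs with girth $3$ or girth $5+$: for these the clean ladder propagation argument breaks down, so I anticipate having to treat a finite list of exceptional small cases by hand (or by the calculator), and to prove a girth dichotomy showing that non-negative curvature forces girth exactly $4$ except for a short enumerable set of exceptions.

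The reason I expect (i) $\Rightarrow$ (iii) and (ii) $\Rightarrow$ (iii) to be the hard part is that, although the local curvature computation is finite, the \emph{global} rigidity — that local-ladder structure admits only two closures — requires a careful induction or a connectivity argument, and the two curvature notions, while known to agree here, do not in general produce identical local constraints, so each implication may need its own local analysis before the shared global propagation argument applies. My plan would therefore be to isolate a single combinatorial lemma — ``non-negative curvature at every edge of a cubic graph forces the local neighbourhood to be that of a prism or M\"obius ladder'' — prove it separately for each curvature notion using the respective curvature formula, and then feed its conclusion into one common global argument that identifies the graph as $Y_n$ or $M_n$.
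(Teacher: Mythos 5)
Your plan follows essentially the same route as the paper: the paper proves (i) $\Leftrightarrow$ (iii) by enumerating and curvature-checking all possible $2$-balls of a cubic graph (via the calculator) and then propagating the surviving local structures, and (ii) $\Leftrightarrow$ (iii) by exactly your girth dichotomy (non-negative $\kappa_0$ forces every edge onto a triangle or square, with girth $3$ yielding only the small exceptions $K_4$ and $Y_3$) followed by the ladder-closure induction in which $L_n$ either extends to $L_{n+1}$ or closes up as $Y_n$ or $M_n$. The only real difference is organisational: the paper runs the local analysis and the global propagation separately for each curvature notion rather than feeding both into one shared global lemma as you propose.
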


An important aspect of applying spectral graph theory to theoretical computer science is the study of the spectral gap of the Laplacian. Expander graphs are highly connected sparse finite graphs. They play important roles in both pure and applied mathematics (see, e.g., \cite{Lubotzky}). The construction of a family of expander graphs, i.e., a family of $d$-regular graphs with increasing sizes and uniformly bounded spectral gaps, is a central topic in this field (see, e.g., \cite{Margulis, LPS, MSS}). 

It has been shown that \emph{positive} lower bounds on curvature ensure the existence of spectral gaps (see \cite{Oll, BJL, BCLL, LLY11, LMP16}). However, a graph with a positive lower bound of curvature can not be arbitrarily large. In fact, it has a bounded diameter (see \cite{Oll, LLY11, LMP17}). Therefore, in particular, there exists no families of expander graphs in the space of positively curved graphs.  
An important open question in this area is on the existence of expander graphs in the space of \emph{non-negatively} curved graphs. This question for Ollivier-Ricci curvature has been asked in \cite[Problem T]{Oll10} (Ollivier mentioned this problem was suggested by A. Noar and E. Milman), and for Bakry-\'Emery curvature in \cite[Question 4.8]{LP14}. We will show under both Bakry-\' Emery and Ollivier-Ricci curvature notions that no non-negatively curved cubic expanders exist.

\section{Definitions}
Throughout this article, let $G=(V,E)$ be a locally finite graph with
vertex set $V$, edge set $E$, and which contains no multiple edges or
self loops. Let $d_x$ denote the degree of the vertex $x\in V$ and
$d(x,y)$ denote the length of the shortest path between two vertices
$x$ and $y$. We denote the existence of an edge between $x$ and $y$ by $x\sim y.$
A graph $G = (V,E)$ is called {\it cubic} if it is $3$-regular, that is, if $d_{v} = 3$ for every $v\in V.$

A {\it prism graph}, denoted $Y_n$, is a graph corresponding to the skeleton of an $n$-prism (see Figure \ref{fig_prism}). Prism graphs are therefore both planar and polyhedral. An $n$-prism graph has $2n$ nodes and $3n$ edges. A {\it M\"obius ladder} $M_n$ is a  graph obtained by introducing a twist in a prism graph $Y_n$, see Figure \ref{fig_mobius}. The M\"obius ladder $M_n$ can also be defined as a cycle $C_{2n}$, where the opposite vertices have been joined together.

\begin{figure}[h!]
  \centering
\begin{tikzpicture}
  \tikzset{vertex/.style={circle, draw, fill=black!50,
                        inner sep=0pt, minimum width=4pt}}
  \tikzset{emptyvertex/.style={shape=circle,minimum size=0.6cm}}

  \node (a1) [vertex]{} ;
  \node (b1) [vertex, below=of a1] {$$};
  \node (a2) [vertex, right=of a1] {};
  \node (b2) [vertex, right=of b1] {};
  \node (a3) [vertex, right=of a2] {};
  \node (b3) [vertex, right=of b2] {};
  \node (a4) [emptyvertex, right=of a3] {};
  \node (b4) [emptyvertex, right=of b3] {};
  \node (an1) [vertex, right=of a4] {};
  \node (bn1) [vertex, right=of b4] {};
  \node (an) [vertex, right=of an1] {};
  \node (bn) [vertex, right=of bn1] {};
  \path
  (a1) edge (b1)
  (a1) edge (a2)
  (b1) edge (b2)
  (a2) edge (b2)
  (a2) edge (a3)
  (b2) edge (b3)
  (a3) edge (b3)
  (a3) edge[dashed] (a4)
  (b3) edge[dashed] (b4)
  (a4) edge[dashed] (an1)
  (b4) edge[dashed] (bn1)
  (an1) edge (bn1)
  (an1) edge (an)
  (bn1) edge (bn)
  (an) edge (bn)
  ;
\draw (a1) to [out=20,in=160, distance=2cm] node[draw=none,fill=none] {} ++(an) ;
\draw (b1) to [out=340,in=200, distance=2cm] node[draw=none,fill=none] {} ++(an) ;
\end{tikzpicture}
  \caption{The prism graph $Y_n$}\label{fig_prism}
\end{figure}
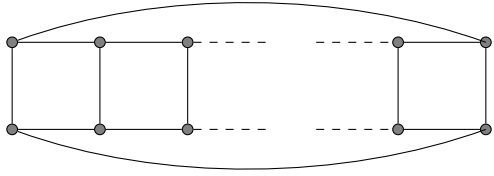

\begin{figure}[h!]
  \centering
\begin{tikzpicture}
  \tikzset{vertex/.style={circle, draw, fill=black!50,
                        inner sep=0pt, minimum width=4pt}}
  \tikzset{emptyvertex/.style={shape=circle,minimum size=0.6cm}}

  \node (a1) [vertex]{} ;
  \node (b1) [vertex, below=of a1] {$$};
  \node (a2) [vertex, right=of a1] {};
  \node (b2) [vertex, right=of b1] {};
  \node (a3) [vertex, right=of a2] {};
  \node (b3) [vertex, right=of b2] {};
  \node (a4) [emptyvertex, right=of a3] {};
  \node (b4) [emptyvertex, right=of b3] {};
  \node (an1) [vertex, right=of a4] {};
  \node (bn1) [vertex, right=of b4] {};
  \node (an) [vertex, right=of an1] {};
  \node (bn) [vertex, right=of bn1] {};
  \path
  (a1) edge (b1)
  (a1) edge (a2)
  (b1) edge (b2)
  (a2) edge (b2)
  (a2) edge (a3)
  (b2) edge (b3)
  (a3) edge (b3)
  (a3) edge[dashed] (a4)
  (b3) edge[dashed] (b4)
  (a4) edge[dashed] (an1)
  (b4) edge[dashed] (bn1)
  (an1) edge (bn1)
  (an1) edge (an)
  (bn1) edge (bn)
  (an) edge (bn)
  (a1) edge (bn)
  (b1) edge (an)
  ;
\end{tikzpicture}
  \caption{The M\"obius ladder $M_n$}\label{fig_mobius}
\end{figure}
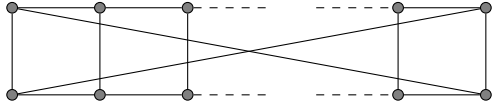

\subsection{Bakry-\'Emery curvature}
 For any function $f: V\to \mathbb{R}$ and any
vertex $x\in V$, the \emph{(non-normalized) Laplacian} $\Delta$ is
defined via
\begin{equation}\label{eq:nonnormalised_Laplacian}
\Delta f(x):=\sum_{y,y\sim x}(f(y)-f(x)).
\end{equation}
The notion of a Laplacian can be generalised by introducing a vertex
measure and edge weights. In this article we will only consider
curvature associated to the non-normalized Laplacian.

\begin{definition}[$\Gamma$ and $\Gamma_{2}$ operators]\label{defn:GammaGamma2}
Let $G=(V,E)$ be a locally finite simple graph.
For any two functions $f,g: V\to \mathbb{R}$, we define
\begin{align*}
2\Gamma(f,g)&:=\Delta(fg)-f\Delta g-g\Delta f;\\
2\Gamma_2(f,g)&:=\Delta\Gamma(f,g)-\Gamma(f,\Delta g)-\Gamma(\Delta f,g).
\end{align*}
\end{definition}
We will write $\Gamma(f):=\Gamma(f,f)$ and $\Gamma_2(f,f):=\Gamma_2(f)$, for short.

\begin{definition}[Bakry-\'Emery curvature]\label{defn:BEcurvature} Let $G=(V,E)$ be a locally finite simple graph. Let $\mathcal{K}\in \mathbb{R}$ and $\mathcal{N}\in (0,\infty]$. We say that a vertex $x\in V$ satisfies the \emph{curvature-dimension inequality} $CD(\mathcal{K},\mathcal{N})$, if for any $f:V\to \mathbb{R}$, we have
\begin{equation}\label{eq:CDineq}
\Gamma_2(f)(x)\geq \frac{1}{\mathcal{N}}(\Delta f(x))^2+\mathcal{K}\Gamma(f)(x).
\end{equation}
We call $\mathcal{K}$ a lower Ricci curvature bound of $x$, and $\mathcal{N}$ a dimension parameter. The graph $G=(V,E)$ satisfies $CD(\mathcal{K},\mathcal{N})$ (globally), if all its vertices satisfy $CD(\mathcal{K},\mathcal{N})$.
\end{definition}

In this paper we only wish to find graphs that satisfy $CD(0,\infty).$ Thus Equation (\ref{eq:CDineq}) becomes
\begin{equation}\label{eq:CDineq2}
\Gamma_2(f)(x)\geq 0.
\end{equation}
We call  such graphs {\it non-negatively curved} under the Bakry-\' Emery curvature notion.

\subsection{Ollivier-Ricci curvature}

In order to define the Ollivier-Ricci curvature on the edges of a graph, we need to define the probability distributions that we consider and the Wasserstein distance between  distributions. So, let us define the following probability distributions $\mu^p_x$ for any
$x\in V,\: p\in[0,1]$:
$$\mu_x^p(z):=\begin{cases}p,&\text{if $z = x$,}\\
\frac{1-p}{d_x},&\text{if $z\sim x$,}\\
0,& \mbox{otherwise.}\end{cases}$$

\begin{defn}
Let $G = (V,E)$ be a locally finite graph. Let $\mu_{1},\mu_{2}$ be two probability measures on $V$. The {\it Wasserstein distance} $W_1(\mu_{1},\mu_{2})$ between $\mu_{1}$ and $\mu_{2}$ is defined as
\begin{equation} \label{eq:W1def}
W_1(\mu_{1},\mu_{2})=\inf_{\pi} \sum_{y\in V}\sum_{x\in V} d(x,y)\pi(x,y),
\end{equation}
where the infimum is taken over all transportation plans $\pi:V\times  V\rightarrow [0,1]$ satisfying
$$\mu_{1}(x)=\sum_{y\in V}\pi(x,y),\:\:\:\mu_{2}(y)=\sum_{x\in V}\pi(x,y).$$
\end{defn}

The transportation plan $\pi$ takes the distribution $\mu_1$ to the distribution $\mu_2$, and $W_1(\mu_1,\mu_2)$ is a measure for the minimal effort
which is required for such a transition. If $\pi$ attains the infimum in \eqref{eq:W1def} we call it an {\it
  optimal transport plan} transporting $\mu_{1}$ to $\mu_{2}$.

\begin{defn}
The $ p-$Ollivier-Ricci curvature on an edge $x\sim y$ in $G=(V,E)$ is
$$\kappa_{ p}(x,y)=1-W_1(\mu^{ p}_x,\mu^{ p}_y),$$
where the parameter $p$ is called the {\it idleness}.
\end{defn}

From the definition of the Wasserstein metric we see, that we get an upper bound for $W_1$ and thus a lower bound for the curvature by choosing some suitable $\pi$. Using the Kantorovich duality (see e.g. \cite[Ch. 5]{Vill09}), a fundamental concept in the optimal transport theory, we can approximate to the opposite direction:

\begin{theorem}[Kantorovich duality]\label{Kantorovich}
Let $G(V,E)$ be a locally finite graph, and let $\mu_{1},\mu_{2}$ be two probability measures on $V$. Then
$$W_1(\mu_{1},\mu_{2})=\sup_{\substack{\phi\: :\: V\rightarrow \mathbb{R}\\ \phi\:\in\: {\rm \emph{$1$-Lip}}}}  \sum_{x\in V}\phi(x)(\mu_{1}(x)-\mu_{2}(x)),$$
where \emph{$1$-Lip} denotes the set of all $1$-Lipschitz functions.
If $\phi \in$ \emph{$1$-Lip} attains the supremum we call it an \emph{optimal Kantorovich potential} transporting $\mu_{1}$ to $\mu_{2}$.
\end{theorem}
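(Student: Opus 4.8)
The plan is to recognise $W_1(\mu_1,\mu_2)$ as the optimal value of a linear program and to identify the stated supremum as the optimal value of its dual. In every application in this paper the measures $\mu_1,\mu_2$ are distributions of the form $\mu_x^p$, which are supported on $x$ together with its (finitely many) neighbours, so I will assume throughout that $\mu_1,\mu_2$ have finite support. This reduces the statement to a finite-dimensional linear program and lets me invoke ordinary strong LP duality rather than any infinite-dimensional optimal-transport machinery.

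First I would write the primal problem: minimise $\sum_{x,y} d(x,y)\pi(x,y)$ over transport plans $\pi\ge 0$ subject to the marginal constraints $\sum_y \pi(x,y)=\mu_1(x)$ and $\sum_x \pi(x,y)=\mu_2(y)$. Introducing a dual variable $\phi(x)$ for the first family of constraints and $\psi(y)$ for the second, the dual program is to maximise $\sum_x \mu_1(x)\phi(x)+\sum_y \mu_2(y)\psi(y)$ subject to $\phi(x)+\psi(y)\le d(x,y)$ for all $x,y$. The primal is feasible (the product coupling $\pi(x,y)=\mu_1(x)\mu_2(y)$ satisfies the marginals) and its objective is bounded below by $0$, so strong duality yields equality of the two optimal values.

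It then remains to show the dual value equals $\sup_{\phi\in 1\text{-Lip}}\sum_x \phi(x)(\mu_1(x)-\mu_2(x))$. For the easy inequality, any $1$-Lipschitz $\phi$ gives a feasible dual pair $(\phi,-\phi)$, since $\phi(x)-\phi(y)\le d(x,y)$, and its objective is exactly $\sum_x\phi(x)(\mu_1(x)-\mu_2(x))$. For the reverse inequality I would use the $c$-transform adapted to the metric cost: given any feasible pair $(\phi,\psi)$, replace $\psi$ by $\phi^c(y):=\min_x\bigl(d(x,y)-\phi(x)\bigr)\ge\psi(y)$, which can only raise the objective and, by a one-line triangle-inequality estimate, is $1$-Lipschitz. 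Then replace $\phi$ by $-\phi^c$; taking $x'=x$ in the minimum shows $\phi^c(x)\le d(x,x)-\phi(x)=-\phi(x)$, so $-\phi^c(x)\ge\phi(x)$ and the objective again does not decrease, while $(-\phi^c,\phi^c)$ remains feasible because $\phi^c$ is $1$-Lipschitz. This pair has the form $(\tilde\phi,-\tilde\phi)$ with $\tilde\phi:=-\phi^c\in 1\text{-Lip}$ and objective $\sum_x\tilde\phi(x)(\mu_1(x)-\mu_2(x))$, which establishes the identity and simultaneously exhibits an optimal Kantorovich potential.

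The main obstacle I anticipate is essentially bookkeeping in the metric case: verifying that $\phi^c$ lands in the $1$-Lipschitz class and that each of the two replacements above never lowers the dual objective, so that the two-variable dual collapses to the single-potential formulation. If one wished to drop the finite-support assumption and treat arbitrary probability measures on a locally finite graph, the genuine difficulty would instead be to justify strong duality and the attainment of the supremum in infinite dimensions — precisely the content of the general Kantorovich duality theorem cited from \cite{Vill09} — but the finite-support reduction used here makes that unnecessary.
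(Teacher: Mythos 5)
Your proposal is correct, but it takes a genuinely different route from the paper, because the paper does not prove this theorem at all: it is stated as a known result with a pointer to \cite[Ch.~5]{Vill09}, and the general Kantorovich duality is used as a black box. Your argument replaces that citation with a self-contained, elementary proof in the finitely supported case: reduce $W_1$ to a finite linear program (legitimate, since any coupling of finitely supported measures is itself supported on the product of the supports), invoke finite-dimensional strong LP duality, and then collapse the two-potential dual $(\phi,\psi)$ to the single $1$-Lipschitz potential form via the $c$-transform $\phi^c(y):=\min_x\bigl(d(x,y)-\phi(x)\bigr)$. The three steps of the collapse all check out: feasibility gives $\psi\le\phi^c$ pointwise so the first swap cannot lower the objective while $(\phi,\phi^c)$ remains feasible by definition of the minimum; the triangle inequality makes $\phi^c$ $1$-Lipschitz; and taking $x'=x$ in the minimum justifies the second swap $\phi\mapsto-\phi^c$, after which the pair has the form $(\tilde\phi,-\tilde\phi)$ with $\tilde\phi\in 1\text{-Lip}$. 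A genuine bonus of your route is that finite LP optima are attained, so you also obtain the existence of an optimal Kantorovich potential, which the last sentence of the theorem implicitly presupposes; and your derivation is close in spirit to the paper's own reformulation of $W_1$ as a linear program for the Graph Curvature Calculator, just run in the opposite direction. What the citation buys the paper that your argument does not: the statement as printed allows arbitrary probability measures on a possibly infinite $V$, whereas your proof covers only finitely supported ones. You flag this restriction explicitly, and it suffices for every application in the paper (each $\mu_x^p$ on a locally finite graph is supported on the finite set $B_1(x)$), but strictly speaking you have proved a special case of the literal statement, not the full theorem.
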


In this article we study cubic graphs with curvature $\kappa_0\geq 0$ on all edges, but the Graph Curvature Calculator calculates Olliver-Ricci curvatures with any idleness $p$.

\section{The Graph Curvature Calculator}

The Graph Curvature Calculator, \url{http://www.mas.ncl.ac.uk/graph-curvature/},  is a tool for calculating curvature of graphs under the various curvature notions described in this article. The software provides a powerful, yet easy to use, interface to the Python software graphcurvature.py \cite{graphcurvature_py}. As the interface to the tool is provided over the Web, interested researchers can investigate graph curvature examples without any required knowledge of Python programming. In this chapter we summarise the design and interface to the Graph Curvature Calculator.

\begin{figure}[h]
\includegraphics[width=\textwidth]{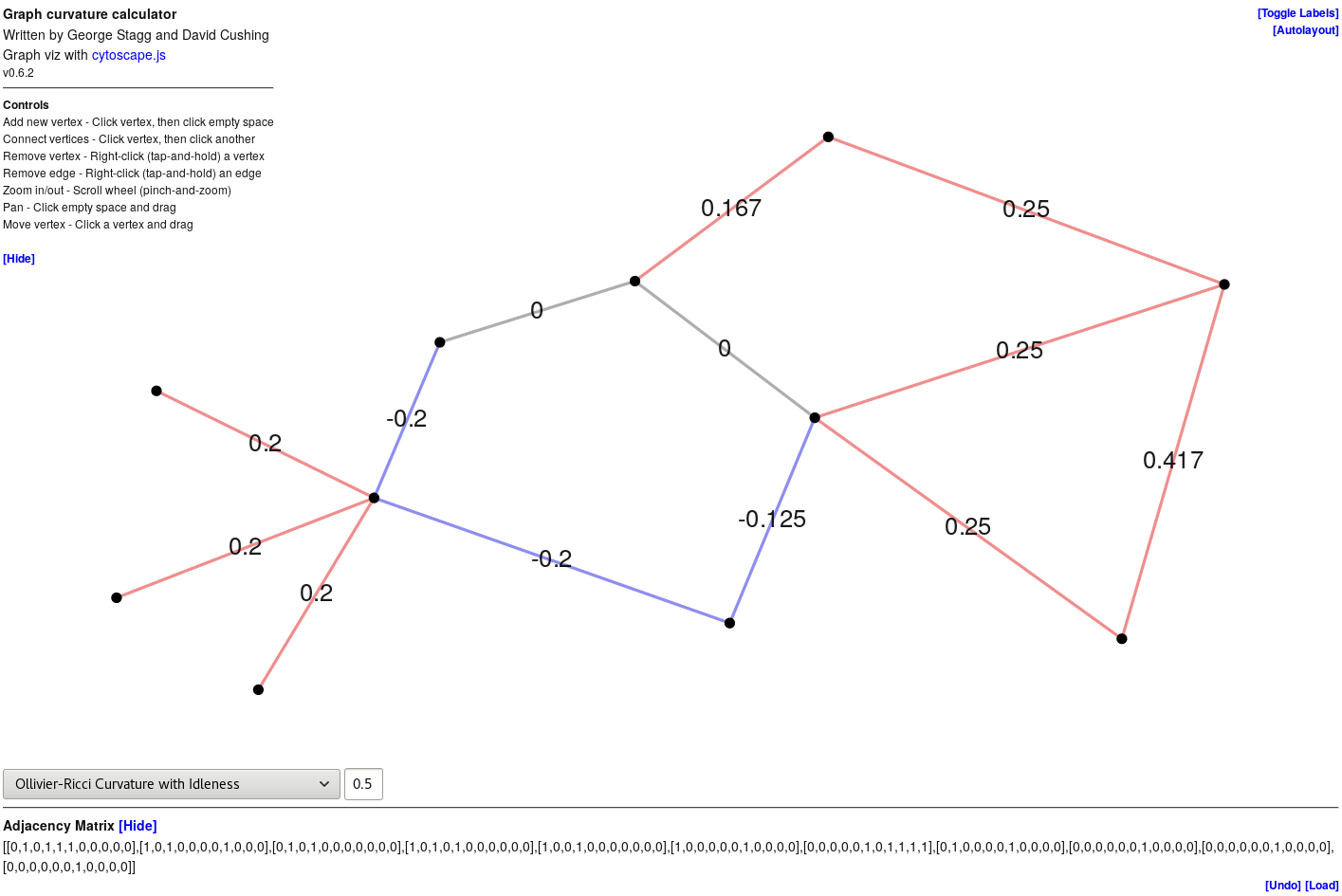}
\caption{The Graph Curvature Calculator shown with a graph loaded and calculating the graph's Ollivier-Ricci Curvature with idleness 0.5.\label{fig:ccscreenshot}}
\end{figure}

\subsection{Architecture}\label{sec:curv_calc_arch}

The calculator is designed around a client-server model, a distributed structure that offloads the computational workload from the user's machine and web browser (the client) to a remote machine or collection of machines (the server). The rationale of this design is that the quality and performance of the user's machine does not need to be particularly high to be able to compute graph curvature. The sophisticated numerical calculations and optimisation problems are instead solved server-side and communicated back to the user's machine. A high level diagram of the architecture of the system is shown in Figure \ref{fig:gcc_arch}.

\begin{figure}[h]
\includegraphics[width=\textwidth]{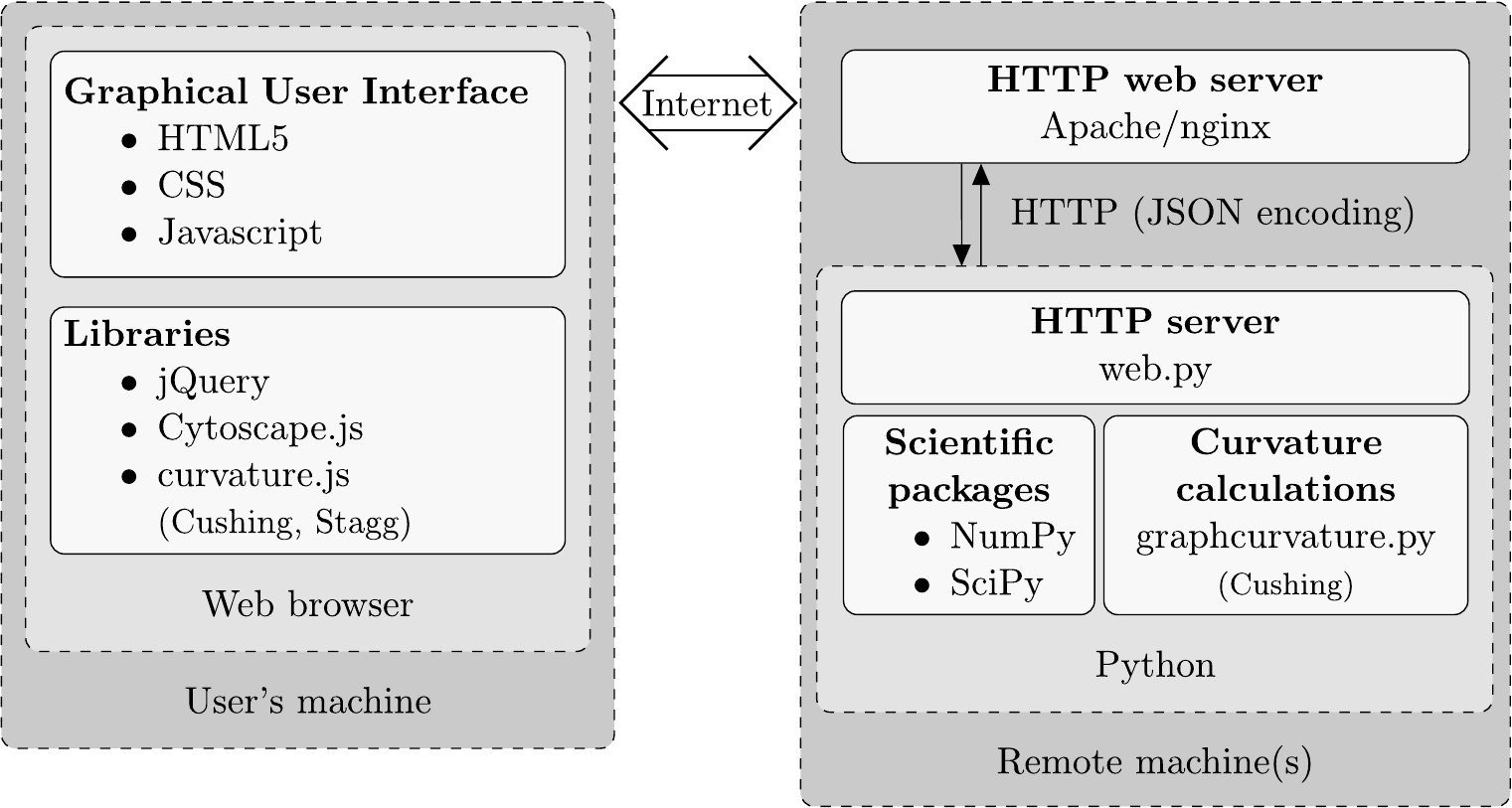}
\caption{Architecture diagram for the Graph Curvature Calculator. The diagram showcases the client-server nature of the configuration, with communication over the Internet using Hypertext Transfer Protocol (HTTP) over TCP connections.\label{fig:gcc_arch}}
\end{figure}

\subsection{The client-side software}
The client-side part of the Graph Curvature Calculator is provided in the form of a \mbox{website}, currently hosted on the public Internet \cite{graph_curv_calc}. The website provides  a Graphical User Interface (GUI) allowing for finite graph input and manipulation, with graph curvature calculations displayed alongside the graph.

The website pages are constructed in HTML and CSS and the client-side software, \mbox{curvature.js}, is built with JavaScript. This is a standard toolchain of web technologies and so should be compatible with all modern web browsers. Cytoscape.js \cite{cytoscape16} provides graph visualisation, and jQuery \cite{jquery} provides additional support. Both JavaScript libraries are free and open-source. While we take full advantage of the graph visualisation routines provided by Cytoscape.js, the curvature calculations are performed on remote machines and so we do not use any of its available analysis routines.

The client-side software allows users to define graphs for curvature calculation in two ways. Firstly, a graph can be loaded into the software by providing an adjacency matrix in so called `JSON' format \cite{json}. For example, the adjacency matrix
\begin{equation*}
\begin{pmatrix}
0&1&1&0\\
1&0&1&1\\
1&1&0&1\\
0&1&1&0
\end{pmatrix}
\end{equation*}
can be inserted by providing the text string \texttt{[[0,1,1,0],[1,0,1,1],[1,1,0,1],[0,1,1,0]]} to the software.

Alternatively, and perhaps more intuitively, the user can ``draw'' graphs by interacting with the website using their mouse or touchscreen device. Inputting graphs by drawing allows for an immediate and constantly updating display of graph curvature. The resulting environment develops the user's intuition for curvature and allows for rapid prototyping of ideas or conjectures.

\subsection{The server-side software}
As the Graph Curvature Calculator is a website, communication between the user's machine and a remote machine is achieved through Hypertext Transfer Protocol (HTTP) over TCP \cite{http}. It is natural, therefore, that the communication of user defined graphs and computational results should also be communicated using HTTP. As such, in our setup the web server also acts as a proxy, passing on requests and responses to and from the computational server where graph curvature is calculated. This choice of setup is extremely flexible with regards to the physical hardware of the remote machines:
\begin{itemize}
\item The computational server need not be on the same physical machine as the web server.
\item While the web server must be visible to the Internet, the computational server need only be visible to the web server rather than the public Internet, improving security.
\item The full state of the graph is transmitted with every curvature request, and so several computational servers can be running simultaneously --- automatically load-balanced by the web server in a `Round-Robin' fashion.
\item Load-balancing can be further improved by running the computational servers in the ``cloud'', bringing virtual machines on-line and off-line to manage system load in real-time.
\end{itemize}

Communication and calculation of graph curvature is achieved using Python on the computational server. The software listens for HTTP requests containing a JSON encoded description of a graph and a requested graph curvature notion. The actual numerical calculation of the graph curvature is performed using the software graphcurvature.py \cite{graphcurvature_py},  with support from the scientific packages NumPy \cite{numpy} and SciPy \cite{scipy}. The curvature calculations are then finally returned to the client-side software as a HTTP response, again encoded in JSON.

\subsection{Bakry-\'Emery curvature as a semidefinite programming problem}
We now reformulate the calculation of Bakry-\'Emery curvature as a semidefinite programming problem. Once this has been achieved it is an easy exercise to be numerically solved.

The following reformulation can also be found in \cite{CLP2016} and \cite{LMP16}.

First we introduce some fundamental notations. For any $r\in\mathbb{N}$,
the $r$-ball centered at $x$ is defined as
$$B_r(x):=\{y\in V: d(x,y)\leq r\},$$
and the $r$-sphere centered at $x$ is
$$S_r(x):=\{y\in V: d(x,y)=r\}.$$
Then we have the following decomposition of the $2$-ball $B_2(x)$:
\begin{equation*}
B_2(x)=\{x\}\sqcup S_1(x)\sqcup S_2(x).
\end{equation*}
We call an edge $\{y,z\}\in E$ a \emph{spherical edge} (w.r.t. $x$) if $d(x,y)= d(x,z)$, and a \emph{radial edge} if otherwise. For a vertex $y\in V$, we define
\begin{align*}
d_y^{x,+}&:=|\{z: z\sim y, d(x,z)>d(x,y)\}|, \\
d_y^{x,0}&:=|\{z: z\sim y, d(x,z)=d(x,y)\}|, \\
d_y^{x,-}&:=|\{z: z\sim y, d(x,z)<d(x,y)\}|.
\end{align*}
In the above, the notation $|\cdot|$ stands for the cardinality of the set. We call $d_y^{x,+}$, $d_y^{x,0}$, and $d_y^{x,-}$ the \emph{out degree}, \emph{spherical degree}, and \emph{in degree} of $y$ w.r.t. $x$. We sometimes write $d_y^+, d_y^0, d_y^-$ for short when the reference vertex $x$ is clear from the context.
\\
\\
We write $\Gamma(x)$ as a $|B_1(x)|\times|B_1(x)|$ matrix corresponding to vertices in $B_1(x)$ given by
\begin{equation*}\label{eq:Gamma}
2\Gamma(x)=\begin{pmatrix}
d_x & -1 & \cdots & -1 \\
-1  & 1 & \cdots & 0 \\
\vdots & \vdots & \ddots &\vdots \\
-1 & 0 & \cdots & 1
\end{pmatrix}.
\end{equation*}

The matrix $\Gamma_2(x)$ is of size $|B_2(x)|\times |B_2(x)|$ with the following structure (\cite[Proposition 3.12]{LMP16})
\begin{equation*}\label{eq:Gamma2}
4\Gamma_2(x)=\begin{pmatrix}
(4\Gamma_2(x))_{x,x} & (4\Gamma_2(x))_{x,S_1(x)} & (4\Gamma_2(x))_{x,S_2(x)} \\
(4\Gamma_2(x))_{S_1(x),x} & (4\Gamma_2(x))_{S_1(x),S_1(x)} & (4\Gamma_2(x))_{S_1(x), S_2(x)}\\
(4\Gamma_2(x))_{S_2(x),x} & (4\Gamma_2(x))_{S_2(x), S_1(x)} & (4\Gamma_2(x))_{S_2(x),S_2(x)}
\end{pmatrix}.
\end{equation*}
The sub-indices indicate the vertices that each submatrix is corresponding to.
We will omit the dependence on $x$ in the above expressions for simplicity. When we exchange the order of the sub-indices, we mean the transpose of the original submatrix. For example, we have $(4\Gamma_2)_{S_1,x}:=((4\Gamma_2)_{x,S_1})^\top$.

Denote the vertices in $S_1(x)$ by $\{y_1,\ldots, y_{d_{x}}\}$. Then we have
\begin{equation*}\label{eq:Gamma2xS1}
(4\Gamma_2)_{x,x}=3d_x+d_x^2,\,\,\,(4\Gamma_2)_{x,S_1}=\begin{pmatrix}
-3-d_x-d_{y_1}^+ & \cdots & -3-d_x-d_{y_{d_x}}^+
\end{pmatrix},
\end{equation*}
and
\begin{align*}
&(4\Gamma_2)_{S_1,S_1}\notag\\
=&\begin{pmatrix}
 5-d_x+3d_{y_1}^++4d^0_{y_1}& 2-4w_{y_1y_2} & \cdots & 2-4w_{y_1y_{d_x}}\\
 2-4w_{y_1y_2} & 5-d_x+3d_{y_2}^++4d^0_{y_2} & \cdots & 2-4w_{y_2y_{d_x}}\\
 \vdots & \vdots & \ddots & \vdots \\
 2-4w_{y_1y_{d_x}} & 2-4w_{y_2y_{d_x}} & \cdots & 5-d_x+3d_{y_{d_x}}^++4d^0_{y_{d_x}}
\end{pmatrix},\label{eq:Gamma2S1S1}
\end{align*}
where we use the notation that for any two vertices $x,y\in V$,
\begin{equation*}\label{eq:0_1_edge_weight}
w_{xy}=\begin{cases}
1, & \text{ if $x\sim y$ }\\
0, & \text{otherwise}.
\end{cases}
\end{equation*}

Denote the vertices in $S_2(x)$ by $\{z_1,\ldots, z_{|S_2(x)|}\}$. Then we have
\begin{equation*}\label{eq:Gamma2xS2}
(4\Gamma_2)_{x,S_2}=\begin{pmatrix}
d_{z_1}^- & d_{z_2}^- & \cdots & d_{z_{|S_2(x)|}}^-
\end{pmatrix}
,
\end{equation*}
\begin{equation*}\label{eq:Gamma2S1S2}
(4\Gamma_2)_{S_1,S_2}=\begin{pmatrix}
-2w_{y_1z_1} & -2w_{y_1z_2} & \cdots & -2w_{y_1z_{|S_2(x)|}}\\
\vdots & \vdots & \ddots & \vdots \\
-2w_{y_{d_x}z_1} & -2w_{y_{d_x}z_2} & \cdots & -2w_{y_{d_x}z_{|S_2(x)|}}
\end{pmatrix}.
\end{equation*}
and
\begin{equation*}\label{eq:Gamma2S2S2}
(4\Gamma_2)_{S_2,S_2}=\begin{pmatrix}
d_{z_1}^- & 0 & \cdots & 0 \\
0 & d_{z_2}^- & \cdots & 0 \\
\vdots & \vdots & \ddots & \vdots\\
0 & 0 & \cdots & d^-_{z_{|S_2(x)|}}
\end{pmatrix}.
\end{equation*}
Note that each diagonal entry of $(4\Gamma_2)_{S_2,S_2}$ is positive.
\\
\\
Let $A(G)$ be the adjacency matrix of the graph $G$. Then we see
$$(4\Gamma_2)_{S_1,S_2}=-2\cdot A(G)_{S_1,S_2}.$$
\\
\\
We are now ready to state the reformulation.
\begin{proposition}[\cite{LMP16}]\label{prop:LMP}
Let $G=(V,E)$ be a locally finite simple graph and let $x\in V$. The Bakry-\'Emery curvature function $\mathcal{K}_{G,x}(\mathcal{N})$ valued at $\mathcal{N}\in (0,\infty]$
 is the solution of the following semidefinite programming,
\begin{align*}
 &\text{maximize}\,\,\, \mathcal{K}\\
&\text{subject to}\,\,\,\Gamma_2(x)-\frac{1}{\mathcal{N}}\Delta(x)^\top\Delta(x)\geq \mathcal{K}\Gamma(x),
\end{align*}
\end{proposition}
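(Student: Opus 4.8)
The plan is to recognise the curvature-dimension inequality \eqref{eq:CDineq} at a fixed vertex $x$ as a single positive-semidefiniteness (Loewner) condition on a symmetric matrix, after which the proposition follows at once from the definition of the curvature function as the largest admissible lower bound $\mathcal{K}$.

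First I would exploit locality. By \eqref{eq:nonnormalised_Laplacian} and Definition \ref{defn:GammaGamma2}, the three functionals occurring in \eqref{eq:CDineq}, namely $\Delta f(x)$, $\Gamma(f)(x)$ and $\Gamma_2(f)(x)$, depend only on the restriction of $f$ to $B_2(x)$: indeed $\Delta f(x)$ and $\Gamma(f)(x)$ involve only $f(x)$ and its neighbours, while $2\Gamma_2(f)(x)=\Delta\Gamma(f)(x)-2\Gamma(f,\Delta f)(x)$ reaches at most two edges out from $x$. Hence the inequality need only be tested over functions $f\in\mathbb{R}^{B_2(x)}$, which I identify with column vectors of length $|B_2(x)|$.

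Next I would write each functional as a quadratic form in this vector. Since $\Delta f(x)=-d_x f(x)+\sum_{y\sim x}f(y)$ is linear, let $\Delta(x)$ be the corresponding $1\times|B_2(x)|$ row, so that $\Delta f(x)=\Delta(x)f$ and $(\Delta f(x))^2=f^\top\Delta(x)^\top\Delta(x)f$. Expanding the definitions of $\Gamma$ and $\Gamma_2$ and symmetrising yields $\Gamma(f)(x)=f^\top\Gamma(x)f$ and $\Gamma_2(f)(x)=f^\top\Gamma_2(x)f$, where $\Gamma(x)$ and $\Gamma_2(x)$ are exactly the symmetric matrices recorded above, with $\Gamma(x)$ extended by a zero block over $S_2(x)$ to reflect that $\Gamma(f)(x)$ ignores the second sphere. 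The entry-by-entry verification of the block structure of $\Gamma_2(x)$ is the computational heart of the argument and is precisely the step I expect to be the main obstacle; rather than reprove it I would import the displayed submatrices from \cite[Proposition 3.12]{LMP16}.

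With these identifications, \eqref{eq:CDineq} at $x$ reads
\[
f^\top\Bigl(\Gamma_2(x)-\tfrac{1}{\mathcal{N}}\Delta(x)^\top\Delta(x)-\mathcal{K}\,\Gamma(x)\Bigr)f\ \geq\ 0\qquad\text{for all }f\in\mathbb{R}^{B_2(x)},
\]
interpreting $1/\mathcal{N}=0$ when $\mathcal{N}=\infty$. Because the bracketed matrix is symmetric and the inequality is required for every $f$, the standard equivalence that a quadratic form is nonnegative on all vectors if and only if its matrix is positive semidefinite converts $CD(\mathcal{K},\mathcal{N})$ at $x$ into the Loewner condition $\Gamma_2(x)-\tfrac{1}{\mathcal{N}}\Delta(x)^\top\Delta(x)\succeq\mathcal{K}\,\Gamma(x)$. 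By definition $\mathcal{K}_{G,x}(\mathcal{N})$ is the largest $\mathcal{K}$ for which $x$ satisfies $CD(\mathcal{K},\mathcal{N})$, hence the largest $\mathcal{K}$ meeting this semidefinite constraint, which is exactly the optimum of the stated programme. The only point needing a word of care is that this supremum is attained: as a subset of $\mathbb{R}$ the feasible set is closed (the positive-semidefinite cone is closed) and bounded above (since $\Gamma(x)\succeq0$ is nonzero, so large $\mathcal{K}$ violates the constraint on the range of $\Gamma(x)$), and it is nonempty because the Bakry-\'Emery curvature is a finite real number; thus a maximiser exists and ``maximize'' in the programme is justified.
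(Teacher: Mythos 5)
Your proposal is correct and, in essence, takes the same route as the paper: the paper does not actually prove Proposition \ref{prop:LMP} but cites \cite{LMP16}, having already set up exactly the ingredients you use --- locality of $\Delta f(x)$, $\Gamma(f)(x)$, $\Gamma_2(f)(x)$ on $B_2(x)$, the matrix representations of $\Gamma(x)$ and $\Gamma_2(x)$ (with the block entries quoted from \cite[Proposition 3.12]{LMP16}), and the reading of \eqref{eq:CDineq} as the Loewner condition $\Gamma_2(x)-\frac{1}{\mathcal{N}}\Delta(x)^\top\Delta(x)\succeq\mathcal{K}\,\Gamma(x)$, whose largest admissible $\mathcal{K}$ is the curvature.

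The one step that does not stand as written is your justification that the feasible set is nonempty ``because the Bakry-\'Emery curvature is a finite real number'': since $\mathcal{K}_{G,x}(\mathcal{N})$ is \emph{defined} as the supremum of the admissible $\mathcal{K}$, asserting its finiteness (in particular that it is not $-\infty$) presupposes nonemptiness, so the argument is circular. The gap is easily closed using an observation the paper itself records. Modulo constants, the kernel of the extended matrix $\Gamma(x)$ consists of the functions vanishing on $B_1(x)$; for such $f$ one has $\Delta f(x)=0$ and $4\Gamma_2(f)(x)=\sum_{z\in S_2(x)}d^-_z f(z)^2$, which is a positive definite form on $\mathbb{R}^{S_2(x)}$ because $d^-_z\geq 1$ for every $z\in S_2(x)$ (this is the paper's remark that each diagonal entry of $(4\Gamma_2)_{S_2,S_2}$ is positive). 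Splitting a general $f$ into its component in $\ker\Gamma(x)$ and a component orthogonal to it, and absorbing the cross terms by Cauchy--Schwarz, one obtains $\Gamma_2(x)-\frac{1}{\mathcal{N}}\Delta(x)^\top\Delta(x)-\mathcal{K}\,\Gamma(x)\succeq 0$ for all sufficiently negative $\mathcal{K}$, which gives nonemptiness; combined with your closedness and upper-boundedness arguments this yields attainment of the maximum, completing the proof.
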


This problem was then numerically solved using Python. Some tools from Numpy were used.

The available variations of Bakry-\'Emery curvature on the Graph Curvature Calculator are
\begin{itemize}
\item
Non-normalised curvature sign:
\\
For each vertex $x$ this computes the sign of $\mathcal{K}_{G,x}(\infty)$.
\item
Non-normalised curvature:
\\
For each vertex $x$ this computes the value of $\mathcal{K}_{G,x}(\infty)$ to 3 d.p.
\item
Non-normalised curvature with finite dimension
\\
For each vertex $x$ this computes the value of $\mathcal{K}_{G,x}(\mathcal{N})$ to 3 d.p. for a input dimension $\mathcal{N}.$
\end{itemize}
For each of the above options there are ``Normalised'' analogues in which the above procedure is carried out but with respect to the Normalised Laplacian. See \cite{CLP2016} for further details.

\subsection{Ollivier-Ricci curvature as a linear programming problem}
The problem of calculating Ollivier-Ricci curvature can be reformulated into a linear programming program. We will now give an explanation of how to do this. After this reformulation it is relatively simple to solve numerically. We used the SciPy module in Python for the Graph Curvature Calculator.
\\
\\
Let $G = (V,E)$ be a locally finite simple graph. Let $\mu$ and $\nu$ be two probability measures, with finite supports $\{x_1,x_2,\ldots, x_n\}$ and $\{y_1,y_2,\ldots, y_m\}$ respectively. Recall, by Theorem \ref{Kantorovich}, that
\begin{equation}\label{W1sup}W_1(\mu,\nu)=\sup_{\phi(x_i)-\phi(y_j)\leq d(x_i,y_j)}\left\{\sum_i\phi(x_i)\mu(x_i)-\sum_j\phi(y_j)\nu(y_j)\right\},\end{equation}
where $\phi$ is a function on $\{x_1,x_2,\ldots, x_n\}\bigcup\{y_1,y_2,\ldots, y_m\}$.

We now write (\ref{W1sup}) in the standard form of a linear programming problem.
\\
\\
Let $m,\phi, c, \xi$ be the following column vectors:
\begin{align*}
m:=&(\mu(x_i),\ldots,\mu(x_n),\nu(y_i),\ldots, \nu(y_m))^T\in \mathbb{R}^{n+m},\\
\phi:=&(\phi(x_i),\ldots,\phi(x_n),-\phi(y_i),\ldots,-\phi(y_m))^T\in \mathbb{R}^{n+m},\\
c:=&(d(x_1,y_1),\ldots,d(x_1,y_m),d(x_2,y_1),\ldots,d(x_n,y_1),\ldots, d(x_n,y_m))^T\in \mathbb{R}^{nm}.
\end{align*}

We now define the following $(nm)\times (nm)$ matrix $A.$ First denote by $I_m$ be the $m\times m$ identity matrix and by $a_i$ the $m\times n$ matrix with all the terms in the i-th column equal to $1$, and all terms in the other columns equal to $0$, e.g.
$$a_1=\left(
        \begin{array}{cccc}
          1 & 0 & \cdots & 0 \\
          1 & 0 & \cdots & 0 \\
          \vdots & \vdots & \ddots & \vdots \\
          1 & 0 & \cdots & 0 \\
        \end{array}
      \right).
$$
Then $A$ is defined as
$$A:=\left(
      \begin{array}{cc}
        a_1 & I_m \\
        a_2 & I_m \\
        \vdots & \vdots \\
        a_n & I_m \\
      \end{array}
    \right).
$$

Finally we can rewrite (\ref{W1sup}) as
\begin{equation}\label{LinearPro}
W_1(\mu,\nu)= \sup_{A\phi\leq c}m\cdot\phi.
\end{equation}
This is now just the standard form of the linear programming problem. See \cite{LoRo} for an alternate derivation.

The available variations of Ollivier-Ricci curvature on the Graph Curvature Calculator are
\begin{itemize}
\item
Ollivier-Ricci curvature:
\\
Gives the curvature for idleness $p=0.$
\item
Ollivier-Ricci curvature with idleness:
\\
Gives the curvature for an in-putted idleness $p\in [0,1].$
\item
Lin-Lu-Yau Curvature:
\\
Gives the Lin-Lu-Yau curvature. This is calculated by using
$$\kappa_{LLY}(x,y) = \frac{max(d_{x},d_{y})+1}{max(d_{x},d_{y})}\kappa_{\frac{1}{max(d_{x},d_{y})+1}}(x,y),$$
which is proven in \cite{BCLMP}.
\item
Non-normalised Lin-Lu-Yau curvature. A variant of the Lin-Lu-Yau curvature which is based on a preprint by F. M\"unch and R. Wojciechowski \cite{MF}. 
\end{itemize}

\section{Bakry-\'Emery classification}

Our main result of this section is to classify all cubic graphs satisfying $CD(0,\infty).$ We  start by classifying the local structure of such graphs. It is well known that the curvature at a vertex depends only on the structure of the 2-ball of this vertex, see \cite{CLP2016} for further details. Thus we present the local structure of all possible 2-balls and calculate their curvatures, that is, the lower Ricci curvature bounds $\mathcal{K}$, giving us building blocks for our classification result.

In Figures \ref{fig_A1} to \ref{fig_D7} are screenshots of all possible 2-balls and the curvature of their centres (highlighted in yellow) drawn and calculated using the Graph Curvature Calculator. They are ordered by the number of triangles in the 1-ball around the centre: in the 2-ball $A1$ the centre is on three triangles, in the 2-balls $B1$ and $B2$ on two, in $C1$-$C5$ on one, and in $D1$-$D7$ there are no triangles with the centre vertex.
\begin{figure}
\centering
\includegraphics[scale=0.5]{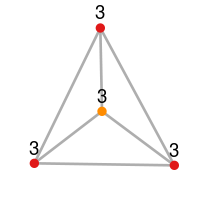}
\includegraphics[scale=0.5]{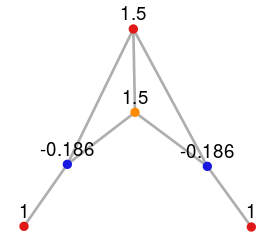}
\includegraphics[scale=0.5]{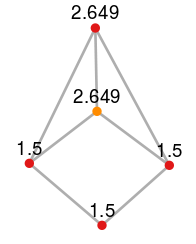}
\caption{The 2-balls $A1$, $B1$ and $B2$}\label{fig_A1}
\end{figure}
\begin{figure}
\centering
\includegraphics[scale=0.5]{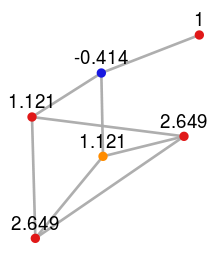}
\includegraphics[scale=0.5]{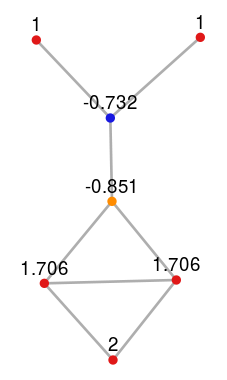}
\includegraphics[scale=0.5]{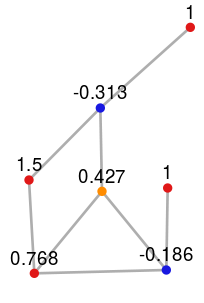}
\caption{The 2-balls $C1$, $C2$ and $C3$}
\end{figure}
\begin{figure}
\centering
\includegraphics[scale=0.5]{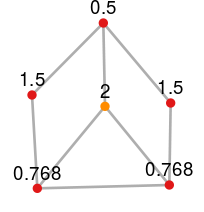}
\includegraphics[scale=0.5]{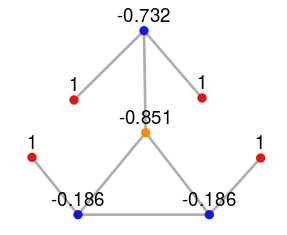}
\caption{The 2-balls $C4$ and $C5$}
\end{figure}
\begin{figure}
\centering
\includegraphics[scale=0.5]{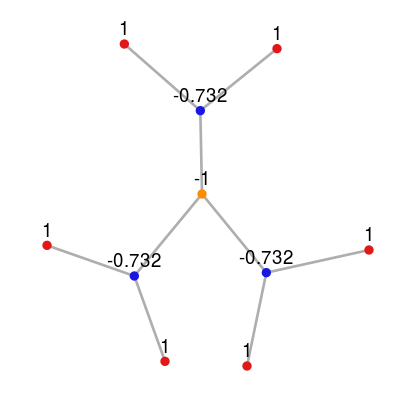}
\includegraphics[scale=0.5]{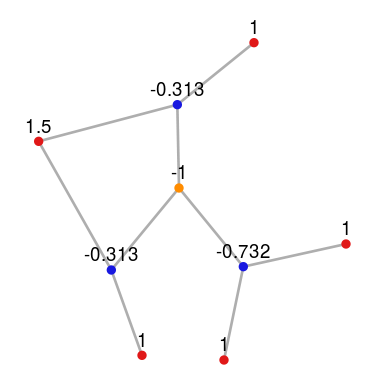}
\includegraphics[scale=0.5]{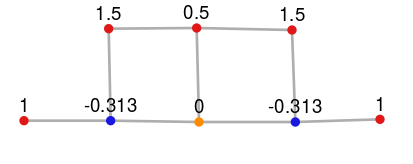}
\includegraphics[scale=0.5]{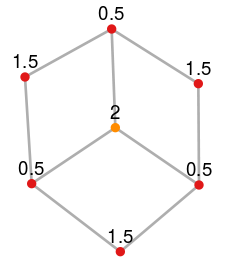}
\caption{The 2-balls $D1$, $D2$, $D3$ and $D4$}
\end{figure}
\begin{figure}
\centering
\includegraphics[scale=0.5]{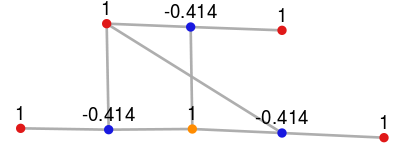}
\includegraphics[scale=0.5]{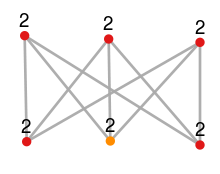}
\includegraphics[scale=0.5]{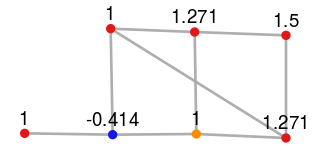}
\caption{The 2-balls $D5$, $D6$ and $D7$}\label{fig_D7}
\end{figure}

\begin{theorem}
Let $G=(V,E)$ be a simple 3-regular graph. Then $G$ satisfies $CD(0,\infty)$ if and only if it is a prism graph $Y_n$ for some $n\geq 3$ or a  M\"obius ladder $M_{k}$ for some $k\geq 2$.
\end{theorem}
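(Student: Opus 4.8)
The plan is to exploit the locality of Bakry-\'Emery curvature: as recalled just before Figure \ref{fig_A1}, the quantity $\mathcal{K}_{G,x}(\infty)$ depends only on the isomorphism type of the $2$-ball $B_2(x)$. In fact, from the explicit form of the matrices $\Gamma(x)$ and $\Gamma_2(x)$ recalled above, it depends only on $B_1(x)$ together with the bipartite incidence between $S_1(x)$ and $S_2(x)$ and the in-degrees of $S_2(x)$, since $(4\Gamma_2)_{S_2,S_2}$ is diagonal and edges internal to $S_2$ are irrelevant. Hence $G$ satisfies $CD(0,\infty)$ if and only if the $2$-ball of every vertex is one of the fifteen local types $A1$--$D7$ and carries non-negative curvature. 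First I would read off from Figures \ref{fig_A1}--\ref{fig_D7} exactly which of these types are non-negatively curved; the claim to be verified is that the admissible types are precisely those realised inside prisms and M\"obius ladders, namely $A1$ (which is $K_4$), the unique one-triangle type occurring in $Y_3$, and the triangle-free ``ladder'' types occurring in the $Y_n$ and $M_k$.

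For the ``if'' direction I would then simply match $2$-balls: every vertex of $M_2=K_4$ has type $A1$; every vertex of $Y_3$ has the admissible one-triangle type; every vertex of $M_3=K_{3,3}$ and of the small graphs $Y_4,M_4$ realises an admissible $D$-type; and for all $n,k\geq 5$ the $2$-ball of a vertex of $Y_n$ or $M_k$ stabilises to a single ``generic ladder'' type (the two families are locally indistinguishable, differing only by the global twist). Since each of these finitely many types was computed to satisfy $\mathcal{K}\geq 0$, every prism and every M\"obius ladder satisfies $CD(0,\infty)$.

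The ``only if'' direction I would organise by the number of triangles through the centre. If some vertex has type $A1$, its closed neighbourhood already equals $K_4$ and $3$-regularity forbids any further incident edge, so $G=K_4=M_2$. The two-triangle types $B1,B2$ are negatively curved and so are excluded outright. If some vertex has the admissible one-triangle type, then every vertex lies on exactly one triangle, the triangles are therefore vertex-disjoint and partition $V$, and each vertex sends exactly one ``rung'' edge out of its triangle; the admissible local pattern (here $|S_2(x)|=2$ with both vertices of in-degree $2$) forces the three rung edges leaving one triangle to terminate on a common second triangle, which closes the graph up into $Y_3$.

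The remaining, and main, case is when $G$ is triangle-free and every $2$-ball is an admissible $D$-type. After disposing of $K_{3,3}=M_3$ and the finitely many small special $D$-types (whose $2$-balls already pin down the entire graph) directly, I would treat the generic ladder type. The key structural fact is that each vertex then lies in exactly two $4$-cycles which share a common edge; calling this edge the \emph{rung} at the vertex, the rungs form a perfect matching $M$, so that $G-M$ is $2$-regular, i.e.\ a disjoint union of cycles (the \emph{rails}). The two $4$-cycles through a given rung link it to the two neighbouring rungs, so the rungs are chained cyclically, and following this chain reconstructs $G$ as a cyclic ladder that closes either without a twist, giving a prism $Y_n$ when $G-M$ is two cycles, or with a single twist, giving a M\"obius ladder $M_k$ when $G-M$ is one $2k$-cycle. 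The hard part is exactly this global reconstruction: proving that the rigid local ladder data cannot be glued into anything other than a prism or a M\"obius ladder, in particular that the rung-chain can neither branch nor close up with more than one twist. The locality reduction together with the calculator outputs eliminates all non-ladder local structures, so this final combinatorial rigidity argument is all that remains.
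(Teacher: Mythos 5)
Your overall architecture (locality of the curvature, enumeration of the fifteen possible $2$-balls, matching of $2$-balls for the ``if'' direction, and reconstruction from the generic ladder type $D3$) parallels the paper's proof. But there is a genuine gap, and it sits exactly where you wrote that ``the claim to be verified is that the admissible types are precisely those realised inside prisms and M\"obius ladders.'' That claim is false. According to the calculator outputs reported in the paper, eleven of the fifteen types have non-negative curvature at the centre: $A1$, $B1$, $B2$, $C1$, $C3$, $C4$, $D3$, $D4$, $D5$, $D6$ and $D7$; only $C2$, $C5$, $D1$ and $D2$ are negatively curved. In particular your assertion that the two-triangle types $B1$, $B2$ ``are negatively curved and so are excluded outright'' is wrong, and likewise $C1$, $C3$, $D5$, $D7$ are locally admissible even though they occur in no prism or M\"obius ladder. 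So a purely local, vertex-by-vertex test against the figures cannot finish the ``only if'' direction: six of the admissible types must be eliminated by a \emph{non-local} argument, which your proposal never supplies.

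The paper closes this gap with a bridge argument. None of the non-negatively curved $2$-balls has a bridge incident to its centre; hence in a cubic graph satisfying $CD(0,\infty)$ no vertex's $2$-ball may contain such a bridge. One then checks that every completion of $B2$, $C1$, $C3$ or $D7$ to a cubic graph forces a bridge in the $2$-ball of some vertex at distance two from the centre (distance two or three for $B1$), and that the unique bridge-free cubic completion of $D5$ (identifying its three degree-one vertices) produces a graph with negative curvature at most vertices. Only after this step is one reduced to the types $C4$, $D3$, $D4$, where your arguments (and the paper's) take over: $C4$ forces $Y_3$, $D4$ forces the cube $Y_4$, and $D3$ everywhere gives the prisms and M\"obius ladders. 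Without the bridge argument, or some substitute for it, your case analysis is incomplete and the classification does not follow.
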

\begin{proof}
By looking at the curvature of the possible 2-balls of cubic graphs we see that only $A1, B1, B2, C1, C3, C4, D3, D4, D5, D6$ and $D7$ can appear in a non-negatively curved graph, since in $C2, C5, D1$ and $D2$ the curvature at the centre is negative. Furthermore note that $A1$ and $D6$ are already cubic graphs, namely $K_{4}=M_2$ and $K_{3,3}=M_3.$ Thus $A1$ and $D6$ do not appear locally anywhere else.

Note that none of the 2-balls $B1, B2, C1, C3, C4, D3, D4, D5$ or $D7$ contains a bridge, that is, an edge such that removing it would disconnect the graph, connected to the centre vertex.  Thus no non-negatively curved cubic graph contains a 2-ball with a bridge connected to its centre (see \cite[Theorem 6.4]{CLP2016} for a more general version of this fact). Now, if we try to extend any of the 2-balls $B2$, $C1$, $C3$, or $D7$ to a cubic graph, this forces a bridge in a 2-ball of one of the vertices at distance two from the centre. Also, extending $B1$ forces a bridge either at a vertex at distance two or three from the centre. Thus $B1$, $B2$, $C1$, $C3$ and $D7$ cannot exist as the 2-ball in a non-negatively curved cubic graph. The 2-ball $D5$ can be extended to a cubic graph without bridges only by joining together all the three vertices with degree one, but this creates a graph with negative curvature at most of the vertices.

We now only have the 2-balls $C4, D3$ and $D4.$ Considering one of the vertices in the 1-sphere of $C4$, it is clear that the only way to extend this into the centre of one of $C4, D3$ or $D4$ is to extend it into $C4$, giving the triangular prism $Y_{3}.$ A similar argument shows that the only non-negatively curved cubic graph containing $D4$ is the 3-dimensional cube, i.e $Y_{4}.$

This leaves us only to classify graphs that have $D3$ as a 2-ball everywhere. It is clear that there are infinitely many of these graphs, i.e. the M\"obius ladders and the prism graphs.
\end{proof}

Remark, that of these graphs only the smallest ones, namely $K_4=M_2$, $K_{3,3}=M_3$, the 3-prism $Y_3$ and the cube $Y_4$,  have positive curvature at all vertices.

\section{Ollivier-Ricci classification}
Let us first see how under Olliver-Ricci curvature $\kappa_0$ non-negatively curved graphs look locally. The following lemma shows that in order to have non-negative Olliver-Ricci curvature on an edge $xy$, the edge must be on a triangle or a square.

\begin{lemma}\label{local_structure} Let $G=(V,E)$ be a 3-regular graph, and let $C_n$ be the smallest cycle $C_n$ supporting the edge $xy\in E(G)$. Then we have the following cases:
\begin{enumerate}
\item[i)] If $n=3$, then $\kappa_0(x,y)\geq 1/3$;
\item[ii)] If $n=4$, then $\kappa_0(x,y)=0$;
\item[iii)] If $n\geq 5$, then $\kappa_0(x,y)<0$.
\end{enumerate}
\end{lemma}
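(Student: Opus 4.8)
The plan is to compute, or at least bound, the Wasserstein distance $W_1(\mu_x^0,\mu_y^0)$ directly, since $\kappa_0(x,y)=1-W_1(\mu_x^0,\mu_y^0)$ and, because $G$ is $3$-regular, both measures are uniform: writing the neighbours of $x$ as $\{y,a_1,a_2\}$ and those of $y$ as $\{x,b_1,b_2\}$, the measure $\mu_x^0$ carries mass $1/3$ on each of $y,a_1,a_2$ and $\mu_y^0$ carries mass $1/3$ on each of $x,b_1,b_2$. I would first record the translation between the girth $n$ of the edge $xy$ and the local combinatorics: $n=3$ exactly when $x$ and $y$ have a common neighbour; $n=4$ exactly when they have no common neighbour but $a_i\sim b_j$ for some $i,j$; and $n\ge 5$ exactly when neither occurs, i.e. the supports of $\mu_x^0$ and $\mu_y^0$ are disjoint and $d(a_i,b_j)\ge 2$ for all $i,j$. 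Throughout, upper bounds on $W_1$ come from exhibiting an explicit transport plan, and lower bounds come from exhibiting an explicit $1$-Lipschitz potential via Kantorovich duality (Theorem \ref{Kantorovich}).

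For (i), with a common neighbour $w$ I would leave the shared mass at $w$ in place (cost $0$) and move the remaining mass by $y\mapsto b_2$ and $a_2\mapsto x$ along single edges, giving $W_1\le 2/3$ and hence $\kappa_0\ge 1/3$; the same idea covers the degenerate case of two common neighbours, where one even obtains $W_1\le 1/3$. For (ii), the transport $a_1\mapsto b_1$ (along the square edge), $a_2\mapsto x$, $y\mapsto b_2$ uses three edges of length one, so $W_1\le 1$; matching this, the potential $\phi(x)=0$, $\phi(y)=\phi(a_1)=\phi(a_2)=1$, $\phi(b_1)=\phi(b_2)=0$ is $1$-Lipschitz on $B_1(x)\cup B_1(y)$ (the main check is $|\phi(a_i)-\phi(b_j)|=1\le d(a_i,b_j)$, which holds since $a_i\ne b_j$) and evaluates the Kantorovich objective to exactly $1$, forcing $W_1=1$ and $\kappa_0=0$.

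For (iii) I would argue that, since all four masses are equal, an optimal plan may be taken to be a permutation matching $\{y,a_1,a_2\}$ to $\{x,b_1,b_2\}$ (the transportation polytope on equal marginals has permutation matrices as its vertices). Using $d(y,\cdot)=1$ on $\{x,b_1,b_2\}$, $d(a_i,x)=1$, and the crucial inequality $d(a_i,b_j)\ge 2$, I would check that each of the six matchings has total cost at least $4$, so that $W_1\ge 4/3>1$ and hence $\kappa_0\le -1/3<0$.

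The routine parts are the transport plans; the step needing the most care is the lower bound on $W_1$ in (ii) and (iii). In (ii) one must confirm the constructed potential is genuinely $1$-Lipschitz on all of $G$ — which reduces, via the McShane--Whitney extension, to Lipschitz-ness on the finite set $\{x,y,a_1,a_2,b_1,b_2\}$ — and in (iii) one must justify restricting to permutation plans and then verify that the case distinction $d(a_i,b_j)\in\{2,3\}$ uniformly lower-bounds the matching cost. A secondary obstacle is the bookkeeping of degenerate configurations (two common neighbours in (i), or extra edges such as $a_1\sim a_2$), where I would check that the stated one-sided bounds are only improved, never violated.
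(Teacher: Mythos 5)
Your proof is correct; parts (i) and the upper bound in (ii) coincide with the paper's transport-plan argument, and the genuine divergence is in how you certify the lower bounds on $W_1$. For (iii) the paper works entirely on the dual side: it writes down an explicit $1$-Lipschitz potential (value $2$ on $S_1(x)\setminus\{y\}$, value $1$ on $\{x,y\}$ and on vertices adjacent to $S_1(x)\setminus\{y\}$, value $0$ elsewhere) and evaluates the Kantorovich functional to $\tfrac{4}{3}$, giving $W_1\geq \tfrac43$ and $\kappa_0\leq -\tfrac13$. You instead argue on the primal side: both measures are uniform on three atoms, so the coupling polytope is a scaled Birkhoff polytope, Birkhoff--von Neumann lets you restrict to the six permutation matchings, and enumeration using $d(a_i,b_j)\geq 2$ (which holds exactly because $xy$ lies on no triangle or square) yields the same bound $W_1\geq\tfrac43$. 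Both are valid; the paper's dual certificate generalizes directly to irregular graphs and to nonzero idleness, where the marginals are no longer uniform and your vertex-enumeration step fails (vertices of a general transportation polytope are not permutations), whereas your primal argument requires no guessing of a potential and identifies the optimal plan exactly. In (ii) your treatment is in fact more complete than the paper's: the paper asserts optimality of the perfect-matching plan without an explicit dual bound (the implicit reason being that triangle-freeness makes the supports of $\mu_x^0$ and $\mu_y^0$ disjoint, so every unit of mass must move distance at least $1$, forcing $W_1\geq 1$), while you exhibit the potential $\phi\equiv 1$ on $S_1(x)$, $\phi\equiv 0$ on $S_1(y)$, verify the Lipschitz condition on the six relevant vertices, and invoke McShane--Whitney to extend it to $V$ as Theorem \ref{Kantorovich} requires. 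Your handling of the degenerate configurations (two common neighbours in (i), extra edges inside the spheres) is also sound, since these only improve the exhibited transport plans and do not disturb the inequality $d(a_i,b_j)\geq 2$ used in (iii).
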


\begin{proof}
Assume that $n=3$, that is, the edge $xy\in E(G)$ is on a triangle. Then the Wasserstein distance $W_1(\mu_x,\mu_y)\leq 2\cdot \frac{1}{3}=\frac{2}{3}$, since the mass distribution at the common neighbour of $x$ and $y$ does not need to me moved by the transport plan. Thus $\kappa_0(x,y)\geq 1/3$.

Assume then that $n=4$, and thus that the edge $xy\in E(G)$ is  on a square but not on a triangle. Then there exists a perfect matching between the 1-spheres $S_1(x):=\{z\in G\; | \; d(x,z)=1\}$ and $S_1(y):=\{z\in G\; | \; d(y,z)=1\}$: if the neighbours of $x$ and $y$ that lie on a square are denoted $x_1$ and $y_1$ and the other two neighbours $x_2$ and $y_2$, then choose to the matching the edges $x_1y_1$, $xx_2$ and $yy_2$. The transport plan that moves masses along this perfect matching is optimal regardless whether  the vertices $x_2$ and $y_2$ are adjacent. Thus we have $W_1(\mu_x,\mu_y)=3\cdot \frac{1}{3}=1$ and $\kappa_0(x,y)=0$.

Assume then that $xy$ is not on a triangle or a square. We can then define a 1-Lipschitz function $\phi$ as follows:
  \begin{equation}\label{eqpot}
    \phi(u) =
    \begin{cases}
      2, & \text{if } u \sim x \text{ and } u \neq y \\
      1, & \text{if } u = x \text{ or } u=y \\
      1, & \text{if } u \sim v \text{ for some } v \sim x, v \neq y \\
      0, & \text{otherwise}
    \end{cases}
  \end{equation}
For this function $\sum_{u \in V}\phi(x)(\mu_x^0(u)-\mu_y^0(u)) = 2(\frac{1}{3}-0) + 2(\frac{1}{3}-0) + 1(0-\frac{1}{3}) + 1(\frac{1}{3} - 0) = \frac{4}{3}$. Thus $W_1(\mu_x^0,\mu_y^0) \geq \frac{4}{3}$, and by the Kantorovich duality \ref{Kantorovich} we have $\kappa_0(x,y) \leq -\frac{1}{3} < 0$.  
\end{proof}

Let us now consider the graphs with $\kappa_0\geq 0$ on all edges. We divide the classification into two parts, considering graphs with constant curvature $\kappa_0=0$ in Theorem \ref{flat_g4} and then graphs with positive curvature on at least one edge in Theorem \ref{flat_g3}.

\begin{theorem}\label{flat_g4} Let $G=(V,E)$ be a simple 3-regular graph. Then  $\kappa_0(x,y)=0$ for all $xy\in E(G)$ if and only if $G$ is a prism graph $Y_n$ for some $n\geq 4$ or a  M\"obius ladder $M_{k}$ for some $k\geq 3$.
\end{theorem}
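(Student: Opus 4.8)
The plan is to first use Lemma~\ref{local_structure} to replace the curvature hypothesis by a purely combinatorial one. I would observe that $\kappa_0(x,y)=0$ on every edge is equivalent, by the lemma, to the statement that the shortest cycle through each edge has length exactly $4$; that is, $G$ is triangle-free and every edge lies on a square. (In particular $G$ has no bridges, since a bridge lies on no cycle and would have $\kappa_0<0$ by case (iii).) Conversely, any $3$-regular graph meeting these combinatorial conditions has $\kappa_0=0$ on every edge directly by part (ii) of the lemma.

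\textbf{The easy direction.} For the ``if'' direction I would simply check that $Y_n$ with $n\geq 4$ and $M_k$ with $k\geq 3$ are $3$-regular, triangle-free, and have every edge on a square, and then invoke Lemma~\ref{local_structure}(ii). The degree and girth bookkeeping is exactly what forces $n\geq 4$ and $k\geq 3$: both $Y_3$ and $M_2=K_4$ contain triangles and are therefore excluded, while $M_3=K_{3,3}$ is bipartite (hence triangle-free) and has every edge on a square, so it is included.

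\textbf{Local structure.} The substance is the ``only if'' direction. Granting the combinatorial conditions, I would first analyse the $2$-ball of an arbitrary vertex $x$ with neighbours $a,b,c$. Since $G$ is triangle-free, $\{a,b,c\}$ is independent, and requiring each of $xa$, $xb$, $xc$ to lie on a square forces each neighbour to share a common neighbour (other than $x$) with one of the other two. Recording which of the pairs in $\{a,b,c\}$ have such a common neighbour yields a graph on three vertices with minimum degree at least $1$, which must be either a path or a triangle. I would reuse the enumeration of cubic $2$-balls $D1$--$D7$ from the Bakry-\'Emery section, retaining only those that are triangle-free and have every centre-edge on a square: the path case gives the generic ``ladder'' $2$-ball, in which exactly one incident edge (the rung) lies on two squares while the other two (the rails) each lie on one, and the triangle case gives the cube-type and $K_{3,3}$-type $2$-balls.

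\textbf{Global assembly and the main obstacle.} Finally I would glue the local pieces together. If some vertex carries the cube-type or $K_{3,3}$-type $2$-ball, a rigidity argument---the local structure already saturates every degree in the $2$-ball---shows that $G$ is exactly $Y_4$ or $M_3$. Otherwise every vertex is of ladder type, the distinguished rung edges form a perfect matching whose removal leaves a $2$-regular graph, and following the squares rung-by-rung produces a cyclic strip of squares that closes up either without a twist, giving a prism $Y_n$, or with a twist, giving a M\"obius ladder $M_k$. The hard part will be precisely this assembly: I expect the bulk of the work to lie in showing that the strip propagates without branching and that no ``short-circuit'' edge joins non-consecutive rungs, and in verifying that the twisted and untwisted closures are the only possibilities. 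This is where triangle-freeness and $3$-regularity must be invoked repeatedly to eliminate the degenerate gluings, exactly as in the analogous step of the Bakry-\'Emery classification.
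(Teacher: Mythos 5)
Your reduction via Lemma \ref{local_structure} (triangle-free and every edge on a square) and your treatment of the ``if'' direction coincide with the paper's. For the ``only if'' direction, however, you organize the work differently: you propose a vertex-based classification of the admissible $2$-balls (ladder-type, cube-type, $K_{3,3}$-type) followed by a global assembly in which the rung edges form a perfect matching and the squares are chained into a strip. The paper instead runs an edge-based induction on ladders: any edge of $G$ sits inside a subgraph $L_3$, and if $L_n\subset G$ ($n\ge 3$) then either $a_1\sim a_n,\ b_1\sim b_n$ (prism; triangle-freeness forces $n\ge 4$), or $a_1\sim b_n,\ b_1\sim a_n$ (M\"obius ladder), or $L_{n+1}\subset G$, and finiteness terminates the induction.

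The genuine weakness of your proposal is that its central step is announced rather than proved: you yourself write that ``the bulk of the work'' lies in showing the strip propagates without branching and without short-circuits, and you leave that work undone. This is exactly where the paper's induction is cheaper than your matching-plus-strip picture. Once $L_n$ is present, every interior vertex $a_2,\dots,a_{n-1},b_2,\dots,b_{n-1}$ already has degree $3$, so the only free edges emanate from $a_1,b_1,a_n,b_n$; a short-circuit into the interior is impossible by $3$-regularity alone, and the requirement that the edge $a_na_{n+1}$ (with $a_{n+1}$ new) lie on a square forces a partner vertex $b_{n+1}$ with $b_n\sim b_{n+1}$ and $a_{n+1}\sim b_{n+1}$, so branching is impossible too. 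If you keep your architecture, you still need these two facts, plus one you did not flag: that in every square through a rung the opposite edge is again a rung (equivalently, that the strip of squares is well defined); this needs an argument via the common-neighbour structure at each vertex, not merely the existence of the matching. Finally, a small inaccuracy: the cube-type $2$-ball does \emph{not} saturate all degrees (its three distance-two vertices each need one further edge), so your ``rigidity'' claim there requires the square condition to force those three edges into a single new vertex, yielding $Y_4$; only the $K_{3,3}$-type $2$-ball is degree-saturated as you claim.
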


\begin{proof}
Since the prism graphs $Y_n$, $n\ge 4$,  and a M\"obius ladders $M_{k}$, $k\geq 3$, are triangle free and every edge of them lies on a square, they by Lemma \ref{local_structure} have $\kappa_0(x,y)=0$ for all edges $xy$.

Assume then that $\kappa_0(x,y)=0$ for all $xy\in E(G)$. From lemma \ref{local_structure} we know that all edges of $G$ are on a $C_4$ and that there are no triangles in the graph.

Consider an edge $xy\in E(G)$. Since the graph is triangle free, the 1-spheres $S_1(x)$  and $S_1(y)$ are disjoint. Denote the vertices as $S_1(x)=\{y,x_1,x_2\}$, $S_1(y)=\{x,y_1,y_2\}$. Since $xy$ lies on a square, we can assume that $x_1\sim y_1$. Let us show that there always exists a 3-ladder $L_3$ (see Figure \ref{lnnames} for $n$-ladder $L_n$) as a subgraph in $G$: Since $\kappa_0=0$ on all edges, also the edges $xx_2$ and $yy_2$ must lie on squares. This is obtained either if $x_2\sim y_2$, or if $x_2\sim y_1$ and $y_2\sim x_1$, and in both cases we have a subgraph $L_3$ in the graph.  

\begin{figure}[h!]
  \centering
\begin{tikzpicture}
  \tikzset{vertex/.style={circle, draw, fill=black!50,
                        inner sep=0pt, minimum width=4pt}}
  \tikzset{emptyvertex/.style={shape=circle,minimum size=0.6cm}}

  \node (a1) [vertex, label=above:$a_1$]{} ;
  \node (b1) [vertex, below=of a1, label=below:$b_1$] {};

  \node (a2) [vertex, right=of a1, label=above:$a_2$] {};
  \node (b2) [vertex, right=of b1, label=below:$b_2$] {};

  \node (a3) [vertex, right=of a2, label=above:$a_3$] {};
  \node (b3) [vertex, right=of b2, label=below:$b_3$] {};

   \node (a4) [emptyvertex, right=of a3] {};
  \node (b4) [emptyvertex, right=of b3] {};

  \node (an1) [vertex, right=of a4, label=above:$a_{n-1}$] {};
  \node (bn1) [vertex, right=of b4, label=below:$b_{n-1}$] {};

  \node (an) [vertex, right=of an1, label=above:$a_n$] {};
  \node (bn) [vertex, right=of bn1, label=below:$b_n$] {};
  \path
  (a1) edge (b1)
  (a1) edge (a2)
  (b1) edge (b2)
  (a2) edge (b2)
  (a2) edge (a3)
  (b2) edge (b3)
  (a3) edge (b3)
  (a3) edge[dashed] (a4)
  (b3) edge[dashed] (b4)
  (a4) edge[dashed] (an1)
  (b4) edge[dashed] (bn1)
  (an1) edge (bn1)
  (an1) edge (an)
  (bn1) edge (bn)
  (an) edge (bn)
  ;

\end{tikzpicture}
  \caption{The ladder graph $L_n$ with labelling}\label{lnnames}
\end{figure}
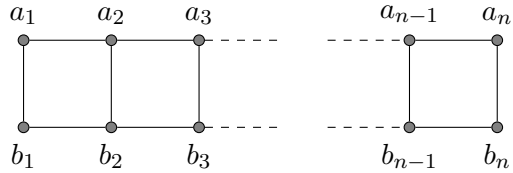

Assume now that $L_n\subset G$ for some $n\ge 3$, and denote the vertices as in Figure \ref{lnnames}. The remaining edges from $a_n$ and $b_n$ also have to lie on squares. If there are no other vertices in the graph, that can happen two ways: either $a_n\sim a_1$ and $b_n\sim b_1$ or $a_1\sim b_n$ and $b_1\sim a_n$. If there is new vertex $a_{n+1}$ such that $a_n\sim a_{n+1}$, then there must also be a second new vertex $b_{n+1}$ with $b_n\sim b_{n+1}$ such that $a_{n+1}\sim b_{n+1}$. But this creates a $L_{n+1}$. Thus one of the following holds:
\begin{enumerate}
\item[i)] $a_1\sim a_n$ and $b_1\sim b_n$,
\item[ii)] $a_1\sim b_n$ and $b_1\sim a_n$, or
\item[iii)] $L_{n+1}\subset G$.
\end{enumerate}
In the first case $G$ is the prism graph $Y_n$, in the second case $G$ is the  M\"obius ladder $M_{n}$.
Remark that the first case is only possible if  $n\ge 4$, otherwise there  would be a triangle $a_1 a_2 a_3$. In the third case we can continue by induction, obtaining larger prism graphs and M\"obius ladders. 
\end{proof}

\begin{theorem}\label{flat_g3}
The only 3-regular graphs with $\kappa_0\geq 0$ on all edges and $\kappa_0(x,y)>0$ on at least one edge are the complete graph $K_4$ and the prism graph $Y_3$.
\end{theorem}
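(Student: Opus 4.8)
The plan is to reduce the whole statement to the local combinatorics around a single triangle, using Lemma~\ref{local_structure} as the only curvature input. Since $\kappa_0 \geq 0$ on every edge, part~(iii) of that lemma forces every edge to lie on a triangle or a square, while parts~(i) and~(ii) show that an edge has $\kappa_0 > 0$ exactly when it lies on a triangle (an edge on a square but no triangle has curvature precisely $0$). Thus the hypothesis that at least one edge is positively curved is equivalent to the existence of a triangle in $G$, and I fix one, $xyz$. As in the preceding theorems I assume $G$ is connected. Because $G$ is cubic, each of $x,y,z$ has a single neighbour outside the triangle; write these as $x',y',z'$.

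Two extreme configurations give the two graphs in the statement, and I would record these first. If $x'=y'=z'=:w$, then $w$ is adjacent to all three triangle vertices, all four vertices already have degree $3$, and $G=K_4$. If instead $x',y',z'$ are distinct and pairwise adjacent, they form a second triangle; then all six vertices $x,y,z,x',y',z'$ have degree $3$ and $G=Y_3$. The entire task is to show that in a graph with $\kappa_0 \geq 0$ everywhere no intermediate configuration survives.

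I would organise the remaining analysis by the number of triangles through a vertex. A vertex whose three neighbours are pairwise adjacent lies on three triangles and, together with those neighbours, spans a full-degree $K_4$, so $G=K_4$. The crucial step is to rule out a vertex lying on \emph{exactly} two triangles: such a vertex $a$ has neighbours forming a path $b\!-\!a\!-\!c$ with $b\not\sim c$, and tracking the square that each remaining edge is forced to lie on propagates outward --- the neighbourhoods of $a,b,c$ and of the next vertices fill up one by one --- until one reaches an edge both of whose endpoints already have all three neighbours determined, none of which is adjacent to the other endpoint; that edge lies on neither a triangle nor a square, contradicting Lemma~\ref{local_structure}. Consequently, unless $G=K_4$, every vertex lies on at most one triangle, and in particular each of $x,y,z$ lies on exactly one. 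Then $x',y',z'$ are distinct, the three edges $xx',yy',zz'$ leaving the triangle cannot lie on triangles and so must lie on squares; writing out this condition shows $xx'$ is on a square iff $x'\sim y'$ or $x'\sim z'$, and cyclically for the others, so at least two of the three adjacencies among $x',y',z'$ hold. If all three hold we obtain $Y_3$; if exactly two hold the same propagation argument --- the middle external vertex saturates and the two outer ones are forced to share a fresh common neighbour whose third edge then joins two saturated vertices --- again produces an edge on no short cycle, a contradiction.

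The main obstacle I anticipate is this propagation bookkeeping: in each ``partially closed'' configuration one must check with care that the newly forced edge genuinely lies on no triangle and no square, i.e. that both endpoints have saturated neighbourhoods containing no vertex adjacent to the opposite endpoint. Once this verification is carried out, the only configurations compatible with $\kappa_0\geq 0$ are the two extremes, yielding $G=K_4$ or $G=Y_3$.
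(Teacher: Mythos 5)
Your proposal is correct and takes essentially the same approach as the paper: both use Lemma \ref{local_structure} as the only curvature input, fix a triangle, and grow the graph outward, using the forced ``every edge lies on a triangle or a square'' condition to pin down adjacencies until the construction closes up as $K_4$ or $Y_3$ or produces a forced edge lying on no triangle or square (a contradiction). The only difference is bookkeeping --- you organise the cases by the number of triangles through a vertex, whereas the paper runs a sequential construction from one edge of the triangle --- and your propagation steps, which do check out, are if anything spelled out more carefully than the paper's.
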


\begin{proof}
By lemma \ref{local_structure} we can assume, that the girth $g(G)=3$. Let us construct all possible cubic graphs starting from a triangle using the knowledge that all edges must lie on a triangle or on a square. So, let $xy$ be an edge that lies on a triangle $xyz$. Denote the third neighbour of $x$ by $x_1$. Since $G$ is $3$-regular, either $y\sim x_1$ or there is another vertex $y_1\sim y$. In the former case the only vertices  with degree less than three are $z$ and $x_1$, and so in order to the last edge from $z$ to be an a triangle or a square, we must have $z\sim x_1$. This gives the graph $K_4$, which is the same as the smallest  M\"obius ladder $M_2$.

Consider then the latter case $y_1\sim y$. Then there exists two isomorphically different possibilities: either $z$ is adjacent to $x_1$ (or, isomorphically, to $y_1$) or to a new vertex $z_1$. In the former case the last edge from $x_1$ has to go to $y_1$ in order to be on a square or a triangle. But that leaves only the vertex $y_1$ with degree less than 3, and the construction cannot be continued. In the latter case when $z$ is adjacent to a new vertex $z_1$, the other two edges from $z_1$ has to go to $x_1$ and $y_1$ to be on squares. That leaves only the vertices $x_1$ and $y_1$ with degrees less than three. Since $d(x_1,y_1)=2$, they must either be adjacent, which gives $Y_3$, or to have another common neighbour, say $u$. But then $u$ would be the only vertex with degree less than three, and the construction could not be continued. Thus, the only possible graphs are $K_4=M_2$ and $Y_3$.
\end{proof}
Remark, that $M_2$ is the only cubic graph that has positive Ollivier-Ricci curvature on all edges, namely $\kappa_0= 2/3$, since for $Y_3$ the curvature on some edges is zero.

Combining these two theorems we have the following classification result:

\begin{corollary}
Let $G=(V,E)$ be a simple 3-regular graph. Then $\kappa_0(x,y)\ge0$ for all $xy\in E(G)$ if and only if $G$ is a prism graph $Y_n$ for some $n\geq 3$ or a  M\"obius ladder $M_{k}$ for some $k\geq 2$.
\end{corollary}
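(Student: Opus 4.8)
The plan is to obtain the corollary as an immediate consequence of Theorems \ref{flat_g4} and \ref{flat_g3}, requiring no new curvature computation. First I would observe that any $3$-regular graph $G$ with $\kappa_0(x,y)\ge 0$ on every edge falls into exactly one of two mutually exclusive and exhaustive cases: either $\kappa_0(x,y)=0$ for all $xy\in E(G)$, or there exists at least one edge with $\kappa_0(x,y)>0$. These two cases are precisely the hypotheses of the two theorems just proved, so the forward implication of the classification will follow by taking the union of the two resulting families.

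In the first case, Theorem \ref{flat_g4} identifies $G$ as a prism graph $Y_n$ with $n\ge 4$ or a M\"obius ladder $M_k$ with $k\ge 3$. In the second case, Theorem \ref{flat_g3} identifies $G$ as $K_4=M_2$ or the triangular prism $Y_3$. Forming the union of these families yields exactly the prism graphs $Y_n$ for $n\ge 3$ together with the M\"obius ladders $M_k$ for $k\ge 2$, which is the asserted list. For the converse I would simply note that every graph on this list lies in one of the two families above: the backward implication of Theorem \ref{flat_g4} (stated there as an equivalence) handles $Y_n$ for $n\ge 4$ and $M_k$ for $k\ge 3$, while Theorem \ref{flat_g3} supplies the two remaining small graphs $Y_3$ and $M_2=K_4$, each of which indeed carries $\kappa_0\ge 0$ on all edges.

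The argument is essentially bookkeeping, and the only point requiring care is matching the parameter ranges. Theorem \ref{flat_g4} excludes $n=3$ and $k=2$ precisely because those graphs contain triangles and hence carry strictly positive curvature on some edge (consistent with part (i) of Lemma \ref{local_structure}); it is exactly these two borderline graphs, $Y_3$ and $M_2=K_4$, that Theorem \ref{flat_g3} provides, so the two ranges dovetail into the full ranges $n\ge 3$ and $k\ge 2$. Thus the only conceivable obstacle, a gap or an overlap between the two classifications, does not arise: the families $\{Y_n: n\ge 4\}\cup\{M_k: k\ge 3\}$ and $\{Y_3,\,M_2\}$ fit together cleanly to give the target list, and the corollary follows.
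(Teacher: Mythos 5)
Your proposal is correct and matches the paper's own treatment: the paper derives this corollary exactly by combining Theorems \ref{flat_g4} and \ref{flat_g3}, splitting into the constant-zero-curvature case and the case with a strictly positive edge, just as you do. Your extra care in checking that the parameter ranges $n\ge 4,\,k\ge 3$ and the two small graphs $Y_3,\,M_2=K_4$ dovetail without gap or overlap is exactly the bookkeeping the paper leaves implicit.
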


\section{Final comments}
For a finite graph $G = (V,E)$ let $\lambda_{1}(G)$ denote the smallest non-zero eigenvalue of its Laplacian.
\begin{definition}
Let  $d\in\mathbb{N}.$ Let $(G_{i})_{i\in\mathbb{N}}$ be an infinite family of $d$-regular graphs such that $|V_{n}|\rightarrow\infty.$ We say that $(G_{i})_{i\in\mathbb{N}}$ is a family of expanders if there exists an $\varepsilon>0$ such that
$$\lambda_{1}(G_{i})\geq \varepsilon$$
for all $i\in\mathbb{N}.$
\end{definition}

It is an open question on whether the space of non-negatively curved graphs, in both the Bakry-\'Emery and Ollivier-Ricci sense, contains expanders. Our classification shows that no cubic expanders with exist.

\begin{theorem}
There is no family of 3-regular expanders that is non-negatively curved in Bakry-\'Emery or Ollivier-Ricci sense.
\end{theorem}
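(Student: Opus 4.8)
The plan is to reduce the statement to the classification already obtained and then to observe that prisms and Möbius ladders have vanishing spectral gap. By Theorem~\ref{main_thm}, a cubic graph satisfies the Bakry-\'Emery condition $CD(0,\infty)$ (condition~(i)) if and only if it has non-negative Ollivier-Ricci curvature on every edge (condition~(ii)), and in either case it must be a prism graph $Y_n$ ($n\ge 3$) or a M\"obius ladder $M_k$ ($k\ge 2$). Consequently any family of $3$-regular graphs that is non-negatively curved in either notion is drawn entirely from $\{Y_n\}\cup\{M_k\}$. Since $|V(Y_n)|=2n$ and $|V(M_k)|=2k$, the expander requirement $|V_i|\to\infty$ forces the family to contain such graphs with the index tending to infinity. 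It therefore suffices to show that $\lambda_1(Y_n)\to 0$ and $\lambda_1(M_n)\to 0$ as $n\to\infty$, which contradicts the existence of a uniform lower bound $\varepsilon>0$.

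To show the spectral gaps vanish I would use the standard variational characterisation of $\lambda_1$ for the combinatorial Laplacian $L=-\Delta=D-A$, namely $\lambda_1(G)=\inf\{\langle f,Lf\rangle/\langle f,f\rangle : f\perp \mathbf 1\}$ with $\langle f,Lf\rangle=\sum_{u\sim v}(f(u)-f(v))^2$, and plug in a single slowly-varying test function. Concretely, both graphs are ``thickened $2n$-cycles'': one can split either graph into two halves $S,\bar S$ of equal size $n$ by severing only four edges, so that the balanced indicator $f=\mathbf 1_S-\tfrac{1}{2}\mathbf 1$ satisfies $f\perp\mathbf 1$, has $\langle f,f\rangle=n/2$, and gives Rayleigh quotient $|\partial S|/\langle f,f\rangle=4/(n/2)=8/n$. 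Hence $\lambda_1\le 8/n\to0$ for both families. Alternatively one may compute the spectra directly: $Y_n=C_n\,\square\,K_2$ has Laplacian eigenvalues obtained as sums of those of $C_n$ and $K_2$, giving $\lambda_1(Y_n)=2-2\cos(2\pi/n)=4\sin^2(\pi/n)$, while $M_n$ is the circulant graph $C_{2n}(1,n)$, whose diagonalisation by characters yields a smallest non-zero eigenvalue of order $1/n^2$; both tend to $0$.

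The one point requiring care, and hence the main obstacle, is the M\"obius ladder, where the twist interacts with the choice of cut. Taking $S$ to be ``one rail'' (equivalently a single arc of $n$ consecutive vertices of the underlying $2n$-cycle) is a poor choice, since then all $n$ diagonal chords cross the cut and the four-edge bound fails. The fix is to choose $S$ as two antipodal arcs, so that every antipodal chord has both endpoints inside $S$ and only the four cycle-edges at the arc boundaries are severed; this restores $|\partial S|=4$ and makes the argument uniform across prisms and M\"obius ladders. Once this is in place the conclusion is immediate: no non-negatively curved family of cubic graphs can maintain a spectral gap bounded away from zero, so no such family of expanders exists.
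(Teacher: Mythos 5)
Your proposal is correct, and the reduction step is identical to the paper's: both invoke Theorem~\ref{main_thm} to confine any non-negatively curved cubic family to the prisms and M\"obius ladders, so that it suffices to show $\lambda_1(Y_n),\lambda_1(M_n)\to 0$. Where you genuinely diverge is in how you kill the spectral gap. The paper computes the spectra \emph{exactly}: for $Y_n=C_n\,\square\,K_2$ it uses the fact that Laplacian eigenvalues of a Cartesian product are sums of the factors' eigenvalues, and for $M_n$ it diagonalises the Laplacian as a circulant matrix; both yield $\lambda_1=2-2\cos(2\pi/n)\sim 4\pi^2/n^2$. You instead use the variational characterisation of $\lambda_1$ with a single balanced-cut test function $f=\mathbf 1_S-\tfrac12\mathbf 1$, and your key observation --- that in the M\"obius ladder one must take $S$ to be \emph{two antipodal arcs} so that every chord has both endpoints on the same side and only four cycle edges are severed --- is exactly the subtlety that makes this work; with it, $\lambda_1\le 8/n$ for both families, which suffices. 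Your route is more elementary (no circulant theory or product-spectrum facts) and more robust: it is really a conductance/Cheeger-type argument showing that \emph{any} bounded-degree family admitting balanced cuts of bounded size cannot be an expander family. What it gives up is precision: $O(1/n)$ rather than the true $\Theta(1/n^2)$ rate, which the paper's exact computation delivers. One small point to tidy up: when $n$ is odd your two sides cannot both have exactly $n$ vertices; either allow $|S|=n\pm 1$ and use $f=\mathbf 1_S-\tfrac{|S|}{2n}\mathbf 1$ (the Rayleigh quotient is still $O(1/n)$), or absorb the discrepancy into the constant --- either way the argument is unaffected.
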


\begin{proof}
By our main Theorem \ref{main_thm} the only 3-regular non-negatively curved graphs under either curvature notion are the prism graphs $Y_n$ and the M\"obius ladders $M_n$. We show that these graphs do not have a spectral gap, that is, the second smallest eigenvalue converges to zero, when $n\rightarrow \infty$, and thus they cannot form a family of expanders.

The prism graphs $Y_n$ are Cartesian products of a cycle $C_n$ and an edge $K_2$. Thus their Laplacian eigenvalues are sums of the eigenvalues of  $C_n$ and $K_2$, see e.g. \cite{BH12}. The Laplacian spectrum for $K_2$ is $\{0,2\}$ and for $C_n$ $\{2-2\cos(\frac{2\pi j}n)\}$, where $j=0,\ldots n-1$. Therefore the smallest non-zero eigenvalue of a prism graph $Y_n$ is $\lambda_1(Y_n)=2-2\cos(\frac{2\pi}{n})$, and thus $\lambda_1(Y_n)\rightarrow 0$, when $n\rightarrow \infty$.

The Laplacian eigenvalues of the M\"obius ladders can be calculated by considering them as cycles $C_{2n}$ with opposite vertices attached. Then it is easy to see that the Laplacian is a circulant matrix, where the first column is $v_0=3$, $v_1=-1$, $v_2=\ldots =v_{n-1}=0$, $v_n=-1$, $v_{n+1}=\ldots = v_{2n-2}=0$, $v_{2n-1}=-1$, and the remaining columns are cyclic permutations of the first one with offset equal to the column index. The Laplacian eigenvalues  of $2n\times 2n$ circulant matrices are
$$\{v_0+v_{2n-1}\omega_j+v_{2n-2}\omega_j^2+\ldots +v_{1}\omega_j^{2n-1}\},$$
where $v$ is the first column, $\omega_j=\exp(\frac{i 2\pi j}{2n})$ are the $j$th roots of unity, and $j=0,\ldots,2n-1$ (see \cite{D94}). Thus the Laplacian spectrum of the M\"obius ladder $M_n$ is
$\{3+(-1)^{j+1}-2\cos(\frac{\pi j}{n})\},$
where $j=0,\ldots,2n-1$. The smallest non-zero eigenvalue is
$\lambda_1(M_n)=3+(-1)^3-2\cos(\frac{\pi 2}{n})$, and thus $\lambda_1(M_n)\rightarrow 0$,
when $n\rightarrow \infty$.
\end{proof}

\begin{rem}
This result can also be obtained by showing that the prism graphs and the M\"obius ladders are abelian Cayley graphs, and then applying the result in \cite{AR94} that Abelian Cayley graphs do not contain expanders.
\end{rem}

{\bf Acknowledgements}
\\
The authors are grateful to Prof. Norbert Peyerimhoff for his continued support and guidance. DC and SL would like to acknowledge that this work was supported by the EPSRC Grant EP/K016687/1 ``Topology, Geometry and Laplacians of Simplicial Complexes''. DC would like to thank Aalto University and RK and SL Durham University for their hospitality during research visits, where some of the above work was carried out.  DC wants to thank the EPSRC for financial support through his postdoctoral prize.


\begin{thebibliography}{99}
\bibitem{AR94} N. Alon and Y. Roichman, {\it Random Cayley graphs and
  expanders}, Random Structures Algorithms 5 (1994), no. 2, 271-284.

\bibitem{BCLL} F. Bauer, F. Chung, Y. Lin and Y. Liu, {\it Curvature aspects of graphs}, Proc. AMS 145(5) (2017), 2033-2042.

\bibitem{BJL} F. Bauer, J. Jost and S. Liu, {\it Ollivier-Ricci curvature and the spectrum of the normalized graph Laplace operator}, Math. Res. Lett. 19 (2012), 1185-1205.

\bibitem{BCLMP}
  D. Bourne, D. Cushing, S. Liu, F. M\"unch,  N. Peyerimhoff, {\it Ollivier-Ricci idleness functions of graphs}, arXiv:
 1704.04398, (2017).

\bibitem{BH12} A. E. Brouwer and W. H. Haemers,
{\it Spectra of graphs}, Universitext, Springer, New York, 2012.

\bibitem{graphcurvature_py} D. Cushing, 2016, \textit{Python program and web-application for calculation of various discrete curvatures on graphs}, \url{http://www.maths.dur.ac.uk/~dma0np/epsrc2013/software/david_cushing_2/graphcurv.html} [Accessed Nov 2017].

\bibitem{CLP2016}
  D. Cushing, S. Liu, N. Peyerimhoff, {\it Bakry-\'Emery curvature functions of graphs}, arXiv: 1606.01496, (2016).

\bibitem{graph_curv_calc}D. Cushing and G. W. Stagg, 2017, \textit{The Graph Curvature Calculator}, \url{http://www.mas.ncl.ac.uk/graph-curvature/} [Accessed Nov 2017].


\bibitem{D94} P. J. Davis, {\it Circulant Matrices}, AMS Chelsea Publishing, 1994.

\bibitem{EM12} M. Erbar and J. Maas, {\it Ricci curvature of finite Markov chains via convexity of the entropy}, J. Arch Rational Mech Anal 206(3) (2012), 997-1038.

\bibitem{Far} H. Farooq, Y. Chen, T. Georgiou, A. Tannenbaum, C. Lenglet, Network Curvature as a Hallmark of Brain Structural Connectivity, preprint available at biorxiv.org/content/early/2017/07/13/162875.

\bibitem{http}R. Fielding, J. Gettys, J. Mogul, H. Frystyk, L. Masinter, P. Leach and T. Berners-Lee, \textit{Hypertext Transfer Protocol -- HTTP/1.1}, IETF RFC 2616, (1999).

\bibitem{Forman} R. Forman, {\it Bochner's method for cell complexes and combinatorial Ricci curvature}, Discret. Comput. Geom. 29(3) (2003), 323-374.

\bibitem{cytoscape16}
M. Franz, C. T. Lopes, G. Huck, Y. Dong, O. Sumer and G. D. Bader, \textit{Cytoscape.js: a graph theory library for visualisation and analysis}, Bioinformatics, \textbf{32} (2), 309-311, (2016).

\bibitem{scipy}E. Jones, T. Oliphant, P. Peterson, \textit{et. al.}, (2001), \textit{{SciPy}: Open source scientific tools for {Python}}, \url{http://www.scipy.org/} [Accessed Nov 2017].

\bibitem{jquery}JS Foundation and other contributors, \textit{jQuery JavaScript Library}, \url{https://github.com/jquery/jquery} [Accessed Nov 2017].

\bibitem{KKRT16} B. Klartag, G. Kozma, P. Ralli, and P. Tetali, Discrete curvature and abelian groups, Canad. J. Math. 68 (2016), 655-674.

\bibitem{LLY11} Y. Lin, L. Lu, and S.-T. Yau,
{\it Ricci curvature of graphs},
Tohoku Math. J. 63 (2011), 605--627.

\bibitem{LY10} Y. Lin and S.-T. Yau, {\it Ricci curvature and eigenvalue estimate on locally finite graphs},
Math. Res. Lett. 17 (2010), 343--356.

\bibitem{LMP16} S. Liu, F. M\"unch, and N. Peyerimhoff, {\it Curvature and higher order Buser inequalities for the graph connection Laplacian}, arXiv:1512.08134.

\bibitem{LMP17} S. Liu, F. M\"unch, and N. Peyerimhoff, {\it Bakry-Emery curvature and diameter bounds on graphs}, arXiv:1608.07778, 2016.

\bibitem{LP14} S. Liu and N. Peyerimhoff, {\it Eigenvalue ratios of nonnegatively curved graphs}, arXiv:1406.6617, 2014.

\bibitem{LoRo}  B. Loisel and P. Romon, {\it Ricci curvature on polyhedral surfaces via optimal transportation}, Axioms 3 (2014), 119--139.

\bibitem{Lubotzky}  A. Lubotzky, {\it Expander graphs in pure and applied mathematics}, Bull. AMS 49 (2012), 113--162.

\bibitem{LPS} A. Lubotzky, R. Phillips, and P. Sarnak, {\it Ramanujan graphs}, Combinatorica  8(3) (1988), 261–277.

\bibitem{MSS} A. W. Marcus, D. A. Spielman, N. Srivastava, {\it Interlacing families I: Bipartite Ramanujan graphs of all degrees}, Proceedings of FOCS, 529-537, 2013;  Ann. of Math. 182 (2015), 307-325.

\bibitem{Margulis} G. A. Margulis, {\it Explicit group theoretical constructions of combinatorial schemes and their application to the design of expanders and concentrators},  Problems of Information Transmission, 24(1) 1988, 39–46.

\bibitem{MF} F. M\"unch and R. Wojciechowski, {\it Ollivier Ricci curvature for general graph Laplacians: Heat equation, Laplacian comparison, non-explosion and diameter bounds}, arXiv:1712.00875, 2017.

\bibitem{NR17} L. Najman and P. Romon (Eds.), {\it Modern Approaches to Discrete Curvature}, Lecture Notes in Mathematics 2184, Springer, 2017.

\bibitem{Ni} C. Ni, Y. Lin, J. Gao, X. D. Gu and Emil Saucan, {\it Ricci Curvature of the Internet Topology}, 2015 IEEE Conference on Computer Communications (INFOCOM).

\bibitem{Oll} Y. Ollivier, {\it Ricci curvature of Markov chains on metric spaces}, J. Funct. Anal. 256 (2009), 810--864.

\bibitem{Oll10} Y. Ollivier, {\it A survey of Ricci curvature for metric spaces and Markov chains}, In: Kotani, M., Hino, M., Kumagai, T. (eds.) Probabilistic Approach to Geometry. Adv. Stud. Pure Math., 57  (2010), 343–381. Math. Soc. Japan, Tokyo.

\bibitem{SGR} R. Sandhu, T. Georgiou, E. Reznik, L. Zhu, I. Kolesov, Y. Senbabaoglu and A. Tannenbaum, {\it Graph Curvature for Differentiating Cancer Networks}, Scientific Reports 5, Article number: 12323 (2015).

\bibitem{SGR2} R. Sandhu, T. Georgiou and A. Tannenbaum, {\it Ricci curvature: An economic indicator for market fragility and systemic risk}, Science Advances, 5, 2 (2016).

\bibitem{SKM}  R. P. Sreejith, K. Mohanraj, J. Jost, E. Saucan, and A. Samal,
{\it Forman curvature for complex networks},
J. Stat. Mech.
(2016) 063206.

\bibitem{TrPRE} C. A. Trugenberger, {\it Random holographic "large worlds" with emergent dimensions}, Phys. Rev. E 94  (2016), 052305. 

\bibitem{TrHE} C. A. Trugenberger, {\it Combinatorial quantum grativity: geometry from random bits}, J. High  Energ. Phys. (2017), 2017:45. 

\bibitem{Vill09} C. Villani, {\it Optimal transport, old and new}, Grundlehren der Mathematischen Wissenschaften [Fundamental Principles of
Mathematical Sciences], vol. 338, Springer-Verlag, Berlin, 2009.

\bibitem{numpy} S. Walt, S. C. Colbert and G. Varoquaux, \textit{The NumPy Array: A Structure for Efficient Numerical Computation}, Computing in Science \& Engineering, \textbf{13}, 22-30, (2011).

\bibitem{WJBan} C. Wang, E. Jonckheere, and R. Banirazi, {\it Wireless network capacity versus Ollivier-Ricci curvature under heat-diffusion (HD) protocol}, American Control Conference (ACC 2014), Portland, OR, June 04-06, 2014, 3536-3541.

\bibitem{WJB} C. Wang, E. Jonckheere, and T. Brun, {\it Ollivier-Ricci curvature and fast approximation to tree-width in embeddability of QUBO problems}, 6th International Symposium on Communications, Control, and Signal Processing (ISCCSP), Athens, Greece, May 21-23, 2014.

\bibitem{WM17} C. Whidden and F. A. Matsen IV, {\it Ricci-Ollivier curvature of the rooted phylogenetic subtree-prune-regraft graph}, Theoretical Computer Science, 699 (2017), 1-20.

\bibitem{json} \textit{The JSON Data Interchange Format}, Standard ECMA-404 1st Edition, ECMA International, (2013).

\end{thebibliography}
\end{document}